\documentclass[11pt]{article}
\usepackage[margin=3cm]{geometry}

\usepackage{amsmath,amsthm,amsfonts,amssymb}
\usepackage{tikz}
\usepackage{lmodern}        
\usepackage{microtype}

\usepackage{multicol}
\usepackage{thmtools}
\usepackage{algorithm}
\usepackage{algpseudocode}
\usepackage{titling}
\usepackage{graphicx}
\usepackage[font={small,it},labelfont=bf]{caption}
\usepackage{appendix}
\usepackage{lipsum}
\usepackage{mathtools}
\usepackage{color}
\definecolor{myurlcolor}{rgb}{0,0.35,0}
\definecolor{mycitecolor}{rgb}{0,0,0.55}
\definecolor{myrefcolor}{rgb}{0.55,0,0}
\usepackage[pagebackref,draft=false]{hyperref}
\hypersetup{colorlinks,
linkcolor=myrefcolor,
citecolor=mycitecolor,
urlcolor=myurlcolor}

\usepackage{cancel}
\usepackage{bbm}
\usepackage[inline]{enumitem}
\setlist[enumerate,1]{label=(\arabic*), ref=\arabic*}
\setlist[enumerate,2]{label=(\roman*), ref=\theenumi.\roman*}
\usepackage{cite}
\usepackage{url}
\usepackage{tikz-cd}
\usepackage{todonotes}
\usepackage{fancyhdr}
\usepackage{forest}
\usepackage{tikz-qtree}

\newcommand{\runtitle}{}
\usepackage{xparse}
\usepackage{extarrows}
\NewDocumentCommand{\Qsym}{}{\mathsf{Q}}        
\newcommand{\Qof}[1]{\Qsym\!\left[#1\right]}

\NewDocumentCommand{\Leaf}{m}{\Qof{#1}}         

\NewDocumentCommand{\DeclareTree}{m m}{\expandafter\def\csname #1\endcsname{#2}}
\DeclareTree{AssemTree}{\Join{\Leaf{S_1}}{\Join{\Leaf{S_2}}{\Leaf{S_3}}{S_{23}}}{S_{1,23}}}

\NewDocumentCommand{\meq}{o m}{%
  \mathrel{\xlongequal{%
    \IfNoValueTF{#1}{#2}{\substack{\text{#1}\\#2}}%
  }}%
}

\let\hat\widehat
\let\tilde\widetilde

\usepackage[capitalize,noabbrev]{cleveref}

\newtheorem{theorem}{Theorem}[section]
\newtheorem{motivation}[theorem]{Motivation}
\newtheorem{proposition}[theorem]{Proposition}
\newtheorem{lemma}[theorem]{Lemma}

\newtheorem{definition}[theorem]{Definition}
\newtheorem{definitionthm}[theorem]{Definition/Theorem}

\newtheorem*{question*}{Question}
\theoremstyle{definition}

\newtheorem{example}[theorem]{Example}
\newtheorem{remark}[theorem]{Remark}

\newtheorem{condition}[theorem]{Condition}
\crefname{motivation}{Motivation}{Motivations}
\AddToHook{cmd/appendix/before}{%
}
{\par\egroup\vskip 0.25ex}



\newlist{caselist}{enumerate}{1}
\setlist[caselist]{label=\textbf{Case~\arabic*:}, ref=Case~\arabic*, leftmargin=*}

\crefname{definitionthm}{Definition/Theorem}{Definitions/Theorems}
\crefname{notation}{Notation}{Notations}
\algnewcommand\algorithmicinput{\textbf{Input:}}
\algnewcommand\algorithmicoutput{\textbf{Output:}}
\algnewcommand\Input{\item[\algorithmicinput]}
\algnewcommand\Output{\item[\algorithmicoutput]}

\newcommand{\Ac}{\mathcal{A}}
\newcommand{\Bc}{\mathcal{B}}
\newcommand{\Cc}{\mathcal{C}}
\newcommand{\Dc}{\mathcal{D}}
\newcommand{\Ec}{\mathcal{E}}
\newcommand{\Fc}{\mathcal{F}}
\newcommand{\Gc}{\mathcal{G}}

\newcommand{\Ic}{\mathcal{I}}

\newcommand{\Lc}{\mathcal{L}}
\newcommand{\Mc}{\mathcal{M}}
\newcommand{\Nc}{\mathcal{N}}

\newcommand{\Pc}{\mathcal{P}}
\newcommand{\Qc}{\mathcal{Q}}

\newcommand{\Sc}{\mathcal{S}}
\newcommand{\Tc}{\mathcal{T}}
\newcommand{\Uc}{\mathcal{U}}
\newcommand{\Vc}{\mathcal{V}}
\newcommand{\Wc}{\mathcal{W}}
\newcommand{\Xc}{\mathcal{X}}
\newcommand{\Yc}{\mathcal{Y}}
\newcommand{\Zc}{\mathcal{Z}}
\newcommand{\Af}{\mathfrak{A}}

\newcommand{\Df}{\mathfrak{D}}

\newcommand{\Gf}{\mathfrak{G}}

\newcommand{\Mf}{\mathfrak{M}}

\newcommand{\Kr}{\mathrm{K}}

\newcommand{\Prb}{\mathrm{P}}
\newcommand{\Qr}{\mathrm{Q}}

\newcommand{\Mb}{\mathbb{M}}

\newcommand{\Qb}{\mathbb{Q}}
\newcommand{\Rb}{\mathbb{R}}

\usepackage{centernot}
\usepackage{stmaryrd}

\newcommand{\Uni}{\mathrm{Uni}}

\newcommand{\ind}{\perp\!\!\!\perp}
\newcommand{\miid}{\,\|\,}
\newcommand{\Do}{\mathsf{do}}
\newcommand{\anc}{\mathsf{Anc}}

\newcommand{\lcb}{\left\{}
\newcommand{\rcb}{\right\}}
\newcommand{\dto}{\dashrightarrow}

\newcommand{\pa}{\mathsf{Pa}}
\newcommand{\de}{\mathsf{De}}
\newcommand{\dr}{\mathrm{d}}

\newcommand{\sm}{\setminus}
\newcommand{\dcup}{\,\dot{\cup}\,}
\newcommand{\wrt}{w.r.t.\ }

\newcommand{\Ibbm}{\mathbbm{1} }

\newcommand{\sep}[2]{\underset{#2}{\stackrel{#1}{\perp}}}

\newcommand{\nsep}[2]{\underset{#2}{\stackrel{#1}{\not\perp}}}
\newcommand{\Ind}[2]{\underset{#2}{\stackrel{#1}{\,\ind\,}}}
\newcommand{\notind}[2]{\underset{#2}{\stackrel{#1}{\not\!\perp\!\!\!\perp}}}
\usepackage{tikz}
\usepackage{tikz-cd}
\usetikzlibrary{arrows,arrows.meta,calc,fit}
\usetikzlibrary{positioning,patterns,matrix}
\usetikzlibrary{decorations.markings,decorations.pathreplacing,decorations.pathmorphing}
\usetikzlibrary{shapes,shapes.arrows,shapes.geometric,shapes.multipart}
\tikzstyle{ndout} = [draw, semithick, shape=circle, minimum size=20pt,inner sep=0pt]
\tikzstyle{ndash} = [draw, dashed, shape=circle, minimum size=20pt,inner sep=0pt]
\tikzstyle{ndexo} = [draw, dashed, shape=circle, minimum size=20pt,inner sep=0pt, fill=lightgray]
\tikzstyle{ndlat} = [draw, semithick, shape=circle, minimum size=20pt,inner sep=0pt, fill=lightgray]
\tikzstyle{ndsel} = [draw, semithick, shape=regular polygon, regular polygon sides=3, minimum size=25pt,inner sep=0pt, fill=lightgray,shape border rotate=180]
\tikzstyle{arout} = [style={->,>=Latex}]
\tikzstyle{arlout} = [style={->,>=Latex}]
\tikzstyle{arlat} = [style={<->,>=Latex}]
\newcommand{\arrhead}{{Latex}}
\newcommand{\arrtail}{{}}
\newcommand{\arrstar}{Rays[n=6]}

\newcommand*{\tuh}[1][]{\mathrel{\tikz [baseline=-0.25ex,\arrtail-\arrhead, #1] \draw [#1] (0pt,0.5ex) -- (1.3em,0.5ex);}}

\newcommand*{\suh}[1][]{\mathrel{\tikz [baseline=-0.25ex,\arrstar-\arrhead, #1] \draw [#1] (0pt,0.5ex) -- (1.3em,0.5ex);}}

\tikzstyle{ndint} = [draw, semithick, shape=rectangle, minimum size=20pt,inner sep=0pt]
\tikzstyle{ndout} = [draw, semithick, shape=circle, minimum size=20pt,inner sep=0pt]
\tikzstyle{ndlat} = [draw, semithick, shape=circle, minimum size=20pt,inner sep=0pt, fill=lightgray]
\tikzstyle{arint} = [style={->,>=Latex,thick}] 
\tikzstyle{arout} = [style={->,>=Latex,thick}] 
\tikzstyle{arlout} = [style={->,>=Latex,thick}] 
\tikzstyle{arlat} = [style={<->,>=Latex,thick}] 

\tikzstyle{out} = [style={o->,>=Latex,style=semithick}]
\tikzstyle{hut} = [style={<-,>=Latex,style=semithick}]
\tikzstyle{tut} = [style=semithick]
\tikzstyle{tuh} = [style={->,>=Latex,style=semithick}]
\tikzstyle{tuo} = [style={-o,style=semithick}]
\tikzstyle{huh} = [style={<->,>=Latex,style=semithick}]
\tikzstyle{ouo} = [style={o-o,style=semithick}]
\tikzstyle{huo} = [style={<-o,>=Latex,style=semithick}]
\tikzstyle{ouh} = [style={o->,>=Latex,style=semithick}]
\tikzstyle{ous} = [style={o-{Rays[n=6]},style=semithick}]
\tikzstyle{hus} = [style={<-{Rays[n=6]},>=Latex,style=semithick}]
\tikzstyle{tus} = [style={-{Rays[n=6]},style=semithick}]
\tikzstyle{sut} = [style={{Rays[n=6]}-,style=semithick}]
\tikzstyle{suh} = [style={{Rays[n=6]}->,>=Latex,style=semithick}]
\tikzstyle{suo} = [style={{Rays[n=6]}-o,style=semithick}]
\tikzstyle{sus} = [style={{Rays[n=6]}-{Rays[n=6]},style=semithick}]

\title{Notes on Forré's Notion of Conditional Independence and Causal Calculus for Continuous Variables}
\date{\today}
\long\def\acks#1{\vskip 0.3in\noindent{\large\bf Acknowledgments}\vskip 0.2in
\noindent #1}

\author{Leihao Chen\thanks{Korteweg-de Vries Institute for Mathematics, University of Amsterdam, Amsterdam, the Netherlands}}

\begin{document}
\maketitle
\begin{abstract}
  Recently, Forré (arXiv:2104.11547, 2021) introduced \emph{transitional conditional independence}, a notion of conditional independence that provides a unified framework for both random and non-stochastic variables. The original paper establishes a strong global Markov property connecting transitional conditional independencies with suitable graphical separation criteria for directed mixed graphs with input nodes (iDMGs), together with a version of causal calculus for iDMGs in a general measure-theoretic setting. These notes aim to further illustrate the motivations behind this framework and its connections to the literature, highlight certain subtlies in the general measure-theoretic causal calculus, and extend the ``one-line” formulation of the ID algorithm of Richardson et al.\ (\textit{Ann. Statist.} 51(1):334--361, 2023) to the general measure-theoretic setting.
\end{abstract}
\tableofcontents

\section{Preliminaries}

We introduce some basic operations on Markov kernels, the definition of Causal Bayesian Network with latent variables and input variables (L-iCBN), interventions on causal models and graph manipulation for acyclic directed mixed graphs with input nodes (iADMGs). References are \cite{forre2021transitional,forre2025mathematical}.

\begin{definitionthm}[Probability calculus]\label{defthm:prob_calculus}
    Let $\Xc$, $\Yc$, $\Zc$, $\Tc$, $\Uc$, $\Wc$ be standard measurable spaces. Let 
    \[
    \begin{aligned}
         &\Kr(X,Y\miid T):\,\Tc \dto  \Xc \times \Yc,\ \Kr_1(Z\miid U,X,T):\, \Uc \times \Xc\times \Tc \dto \Zc, \\
         \text{ and } &\Kr_2(X,Y\miid T,W):\, \Tc \times \Wc \dto \Xc \times \Yc
    \end{aligned}
    \] 
    be Markov kernels.
    
    \begin{enumerate}
        \item Marginalization of Markov kernels: we define the \textbf{marginal Markov kernels} of $\Kr(X,Y\miid T)$ over $X$ and $Y$, respectively, as follows:
    \[ 
    \begin{aligned}
        &\Kr(X\miid T):\, \Tc \dto \Xc,\quad \Kr(X\in A\miid T=t)= \Kr(X \in A, Y \in \Yc\miid T=t), \text{and }\\
        &\Kr(Y\miid T):\, \Tc \dto \Yc,\quad \Kr(Y\in B\miid T=t)= \Kr(X \in \Xc, Y \in B\miid T=t).
    \end{aligned}
    \]
    \item Product of Markov kernels: we define the \textbf{product Markov kernel} of $\Kr_1$ and $\Kr_2$ as follows:
    \[ 
    \begin{aligned}
        &\Kr_1(Z\miid U,X,T) \otimes \Kr_2(X,Y\miid T,W) :\, \Uc \times  \Tc \times \Wc \dto \Zc \times \Xc \times \Yc, \\
        &\Bigl( \Kr_1(Z\miid U,X,T) \otimes \Kr_2(X,Y\miid T,W)\Bigr) (B;(u,t,w))\\
        &=\int \Ibbm_B(z,x,y)\, \Kr_1(Z \in \dr z\miid U=u,X=x,T=t) \, \Kr_2((X,Y) \in \dr (x,y)\miid T=t,W=w).
    \end{aligned}
    \]

    \item Disintegration of Markov kernels: there exists a (essentially unique) Markov kernel\footnote{The existence and (essential) uniqueness are guaranteed by \cite[Lemma~2.23 and Theorem~2.24]{forre2021transitional} (see also \cite[Theorem~1.25]{kallenberg2017random} for a similar result). This generalizes the classical result of disintegration of probability distributions on standard measurable spaces to Markov kernels. This result can also be generalized to analytic measurable spaces \cite{bogachev20kant} and universal measurable spaces \cite{forre2021transitional}.} (called \textbf{conditional Markov kernel of $\Kr(X,Y\miid T)$ given $Y$}) $\tilde{\Kr}(X\miid Y,T):\, \Yc \times \Tc \dto \Xc$
    such that
    \[   \Kr(X,Y \miid T) = \tilde{\Kr}(X \miid Y,T) \otimes \Kr(Y\miid T),\]
    where $\Kr(Y\miid T)$ is the marginal Markov kernel of $\Kr(X,Y\miid T)$ over $Y$. We often denote $\tilde{\Kr}(X \miid Y,T)$ by $\Kr(X \mid Y\miid T)$. Here, essential uniqueness means that if $\Qr(X\miid Y,T)$ is another Markov kernel, then we have $\Kr(X,Y\miid T) = \Qr(X\miid Y,T) \otimes \Kr(Y\miid T)$ iff the measurable subset $N\subseteq \Yc \times \Tc$ is a $\Kr(Y\miid T)$-null set in $\Yc \times \Tc$,\footnote{$N\subseteq \Yc \times \Tc$ is a $\Kr(Y\miid T)$-null set in $\Yc \times \Tc$ if $\Kr(Y\in N_t\miid T=t)=0$ for all $t\in \Tc$ where $N_t=\{y\in \Yc\mid (y,t)\in N\}$.} where 
    \[
    N\coloneqq \lcb (y,t) \in \Yc \times \Tc \mid  \exists A \in \Sigma_\Xc \text{ s.t.\ }\, \Qr(X \in A\miid Y=y,T=t) \neq \Kr(X \in A \mid Y=y \miid T=t)\rcb.
    \]
    \end{enumerate}
\end{definitionthm}

    Another commonly used operation on Markov kernels is the \textbf{composition of Markov kernels}
    $\Kr_1(Z \miid U,X,T) \circ \Kr_2(X,Y \miid T,W) :\, \Uc \times  \Tc \times \Wc \dto \Zc, $
    which is defined using measurable sets $B \subseteq \Zc$ via:
    \[
    \begin{aligned}
    &\Bigl(\Kr_1(Z \miid U,X,T) \circ \Kr_2(X,Y\miid T,W)\Bigr)(B,(u,t,w)) \\
    &= \displaystyle \int \Kr_1(Z \in B\miid U=u,X=x,T=t) \, \Kr_2(X \in \dr x \miid T=t,W=w),
    \end{aligned}
    \] 
    where $Y$ is implicitly marginalized out. In fact, it can be seen as a composition of the product of Markov kernels and marginalization of Markov kernels.

\begin{remark}[String-diagrammatic representation of probability calculus]

There is an intuitive string-diagrammatic representation of the probability calculus rules stated above, shown in \cref{fig:string_diagram}, developed in the computer science and category theory literature \cite{fritz2020synthetic,jacobs2025structured}. As observed by \cite{fritz2020synthetic}, working in the measure-theoretic formulation is analogous to programming in machine code, whereas the string-diagrammatic approach is closer to a high-level programming language: it suppresses low-level details and emphasizes higher-level synthetic structure. Interestingly, this level of abstraction suffices to prove many classical results in measure-theoretic probability theory \cite{fritz2020synthetic,fritz2021_definetti_catprob_josa,chen2024aldous_hoover_catprob,fritz2025empirical_slln_catprob}, and causal models can likewise be formulated at this level \cite{fritz23dsep,lorenz2023causal_string_diagrams}.

This viewpoint has several advantages: (i) it yields an intuitive compositional calculus for Markov kernels via string diagrams; (ii) a single abstract theorem can be instantiated in multiple concrete categories, including some not originally intended for probability, thereby producing domain-specific corollaries; and (iii) its synthetic algebraic proofs suppress measure-theoretic technicalities, making some arguments neater and more readily amenable to computer-assisted reasoning.

\begin{figure}
    \centering
    \includegraphics[width=1\linewidth]{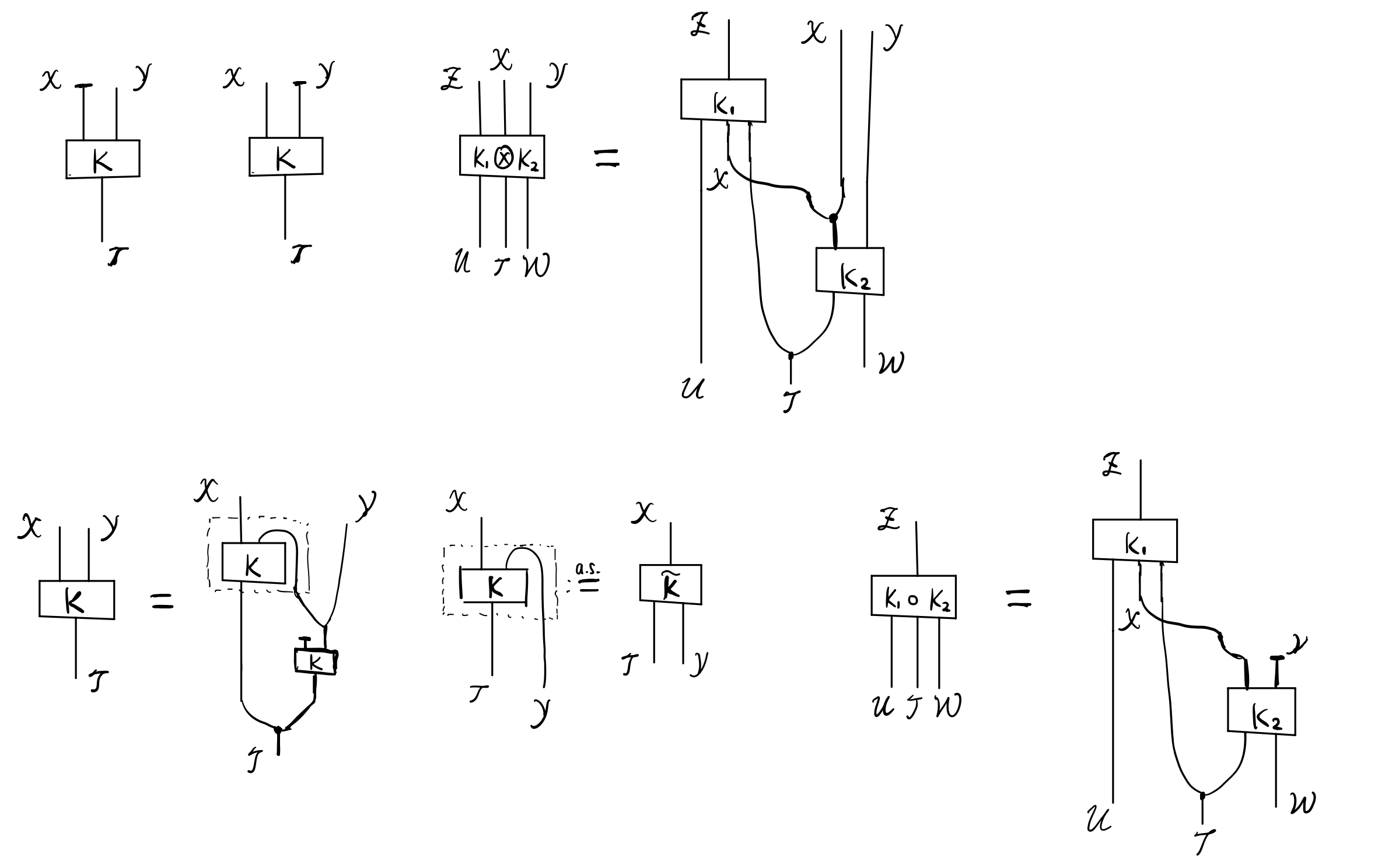}
    \caption{String-diagrammatic representation of the probability calculus in \cref{defthm:prob_calculus}.}
    \label{fig:string_diagram}
\end{figure} 
\end{remark}

    

\begin{definitionthm}[Absolute continuity and Doob-Radon-Nikodym derivative {\protect\cite{forre2021transitional,forre2025mathematical}}]\label{defthm:doob-radon-nikodym}
Let $\Kr(W\miid T)$ and $\Qr(W\miid T)$ be two Markov kernels, and $\mu$ a $\sigma$-finite measure on $\Wc$. We say that $\Kr(W\miid T)$ is \textbf{absolutely continuous} \wrt $\Qr(W\miid T)$ if for all $t\in \Tc$ and $D\in \Sigma_{\Wc}$
\[
    \Qr(W\in D\miid T=t)=0\quad \Longrightarrow \quad \Kr(W\in D\miid T=t)=0.
\]
In symbols, we write $\Kr(W\miid T)\ll \Qr(W\miid T)$. The following two statements are equivalent:
\begin{enumerate}
    \item  $\Kr(W\miid T)\ll \mu$.
    \item $\Kr(W\miid T)$ has a \textbf{Doob-Radon-Nikodym derivative \wrt $\mu$}, i.e., a \emph{joint measurable} map: 
    \[
        p:\Wc\times \Tc\to \Rb_{\geq 0}, \quad (w,t)\mapsto p(w\miid t),
    \]
    such that for all $t\in \Tc$ and $D\in \Sigma_\Wc$:
    \[
        \Kr(W\in D\miid T=t)=\int_D p(w\miid t)\mu(\dr w).
    \]
    In this case, the Doob-Radon-Nikodym derivative is essentially unique, i.e., for two such derivatives $p_1$ and $p_2$ we have $\mu(N_t)=0$ for all $t\in \Tc$ where 
    \[
        N\coloneqq \{(w,t)\in \Wc\times \Tc\mid p_1(w\miid t)\ne p_2(w\miid t)\}\in \Sigma_\Wc\otimes \Sigma_\Tc.
    \]
\end{enumerate}
    Furthermore, $\Kr(W\miid T)$ has a strictly positive Doob-Radon-Nikodym derivative \wrt $\mu$ iff $\mu\ll \Kr(W\miid T)\ll \mu$.
\end{definitionthm}

\begin{definition}[Causal Bayesian Network]\label{def:cbn}
    A \textbf{causal Bayesian network with latent nodes and input nodes (L-iCBN)} $\Mc=\big(\Df=(\Ic,\Vc,\Lc,\Ec),\big\{\Prb_v(X_v\miid X_{\pa_{\Df}(v)})\big\}_{v\in \Vc\dcup \Lc}\big)$ is defined by:
    \begin{enumerate}
        \item a directed acyclic graph with latent nodes and input nodes (L-iDAG) $\Df=(\Ic,\Vc,\Lc,\Ec)$ where $\Ic$ is the set of input nodes, $\Vc$ is the set of observed nodes, and $\Lc$ is the set of latent nodes;
        \item for all $v\in \Ic\dcup\Vc\dcup \Lc$ a standard measurable space $\Xc_v$;
        \item for every $v\in \Vc\dcup \Lc$, a Markov kernel $\Prb_v(X_v\miid X_{\pa_{\Df}(v)})$ from $\Xc_{\pa_{\Df}(v)}$ to $\Xc_v$.
    \end{enumerate}
    We call the marginalized acyclic directed mixed graph with input nodes, $\Af=(\Ic,\Vc,\tilde{\Ec})$, the \textbf{(induced) observable iADMG of $\Mc$} if $\Af=\Df_{\sm \Lc}$. We call the Markov kernel 
    \[
    \Prb_\Mc(X_\Vc\miid X_\Ic):\Xc_\Ic \dto \Xc_\Vc
    \]
    the \textbf{observable Markov kernel of $\Mc$} if 
    \[
        \Prb_\Mc(X_\Vc\in \cdot\miid X_\Ic)\coloneqq \Big(\bigotimes^{\succ}_{v\in \Vc\dcup \Lc}\Prb_v(X_v\miid X_{\pa_{\Df}(v)})\Big)(\cdot,\Xc_\Lc),
    \]
    where $\prec$ is a topological order on $\Df$, and $\succ$ denotes its reverse order.
\end{definition}

\begin{remark}[Input nodes]
    Input variables $X_\Ic$ are also called ``policy variables'' \cite{spirtes2001causation} or ``regime indicators'' \cite{Dawid21decision}. 
\end{remark}

\begin{definition}[Hard/soft manipulation on iADMGs]\label{def:int_iADMG}
    Let $\Af=(\Ic,\Vc,\Ec)$ be an iADMG and $A\subseteq \Ic\cup \Vc$. We define the \textbf{hard manipulated iADMG} $\Af_{\Do(A)}=(\hat{\Ic},\hat{\Vc},\hat{\Ec})$ by 
    \begin{itemize}
        \item $\hat{\Ic}\coloneqq \Ic\dcup (A\cap \Vc)$;
        \item $\hat{\Vc}\coloneqq \Vc\sm A$;
        \item $\hat{\Ec}\coloneqq \Ec\sm \{b\suh a\mid a\in A\cap \Vc \text{ and } b\suh a \text{ is in } \Ec\}$.
    \end{itemize}
    We define the \textbf{soft manipulated iADMG} $\Af_{\Do(I_A)}=(\tilde{\Ic},\tilde{\Vc},\tilde{\Ec})$ by 
    \begin{itemize}
        \item $\tilde{\Ic}\coloneqq \Ic\dcup \{I_a\}_{a\in A\cap \Vc}$;
        \item $\tilde{\Vc}\coloneqq \Vc$;
        \item $\tilde{\Ec}\coloneqq \Ec\dcup \{I_a\tuh a\mid a\in A\cap \Vc\}$.
    \end{itemize}
\end{definition}

Note that hard manipulation, soft manipulation, and marginalization commute with each other: for $A_1,A_2,B_1,B_2\subseteq \Vc$ disjoint, we have
\[
    \begin{aligned}
        (\Af_{\Do(A_1)})_{\Do(A_2)}&=(\Af_{\Do(A_2)})_{\Do(A_1)}=\Af_{\Do(A_1\cup A_2)}\\
        (\Af_{\Do(I_{B_1})})_{\Do(I_{B_2})}&=(\Af_{\Do(I_{B_2})})_{\Do(I_{B_1})}=\Af_{\Do(I_{B_1\cup B_2})}\\
        (\Af_{\Do(A_1)})_{\Do(I_{B_1})}&=(\Af_{\Do(I_{B_1})})_{\Do(A_1)}\\
        (\Af_{\Do(A_1)})_{\sm \Lc}&=(\Af_{\sm \Lc})_{\Do(A_1)} \text{ and } (\Af_{\Do(I_{B_1})})_{\sm \Lc}=(\Af_{\sm \Lc})_{\Do(I_{B_1})}.
    \end{aligned}
\]
See \cite[Section~3]{forre2025mathematical} for a proof.

\begin{definition}[Hard/soft intervention on L-iCBN]\label{def:int_icbn}
Let 
\[
\Mc=\big(\Df=(\Ic,\Vc,\Lc,\Ec),\big\{\Prb_v(X_v\miid X_{\pa_{\Df}(v)})\big\}_{v\in \Vc\dcup \Lc}\big)
\]
be an L-iCBN and $A\subseteq \Vc$. We define \textbf{hard intervened L-iCBN} $\Mf_{\Do(A)}$ to be
\[
    \Mf_{\Do(A)}\coloneqq \big(\Df_{\Do(A)},\big\{\Prb_v(X_v\miid X_{\pa_{\Df}(v)})\big\}_{v\in (\Vc\dcup \Lc) \sm A}\big)
\]
and we have the interventional observable kernel:
\[
    \Prb_{\Mc}(\Xc_{\Vc\sm A}\miid X_\Ic,\Do(X_A))\coloneqq \Prb_{\Mc_{\Do(A)}}(\Xc_{\Vc\sm A}\miid X_\Ic,X_A).
\]
We define \textbf{soft intervened L-iCBN} $\Mf_{\Do(I_A)}$ to be
\[
    \Mf_{\Do(I_A)}\coloneqq \big(\Df_{\Do(I_A)},\big\{\tilde{\Prb}_v(X_v\miid X_{\pa_{\Df_{\Do(I_A)}}(v)})\big\}_{v\in \Vc\dcup \Lc}\big)
\]
where $\Xc_{I_a}\coloneqq \Xc_{a}\dcup \{\star\}$ for $a\in A$ and
\[
    \tilde{\Prb}_v(X_v\miid X_{\pa_{\Df_{\Do(I_A)}}(v)})\coloneqq \begin{cases}
        \Prb_v(X_v\miid X_{\pa_{\Df}(v)}), \quad &\text{ if } v\notin  A\\
        \Qr_v(X_v\miid X_{\pa_{\Df}(v)},X_{I_v}), \quad &\text{ if } v\in A,
    \end{cases}
\]
and 
\[
\begin{aligned}
    &\Qr_v(X_v\in \cdot\miid X_{\pa_{\Df}(v)}=x_{\pa_{\Df}(v)},X_{I_v}=x_{I_v})\\
    &\coloneqq 
    \begin{cases}
        \Prb_v(X_v\in \cdot \miid X_{\pa_{\Df}(v)}=x_{\pa_{\Df}(v)}), \quad &\text{ if } x_{I_v}=\star\\
        \delta_{x_{I_v}}(\cdot), \quad &\text{ if } x_{I_v}\ne \star.
    \end{cases}
\end{aligned}
\]
\end{definition}

\section{Motivating questions and examples}

In this section, we give two motivations (following \cite{forre2021transitional}) for introducing transitional conditional independence: formulating causal calculus in the general measure-theoretic setting and certain statistical concepts in terms of conditional independence. We also give some examples and discussion of the subtleties involved.

\subsection{Causal calculus}\label{sec:motivation_causal_calculus}

\begin{motivation}\label{motivation:calculus}
    Let $\Mc=\big(\Df=(\Ic,\Vc,\Lc,\Ec),\big\{\Prb_v(X_v\miid X_{\pa_{\Df}(v)})\big\}_{v\in \Vc\dcup \Lc}\big)$ be an iCBN and  $\Gf\coloneqq \Df_{\sm \Lc}$ be its marginalized causal graph. Let $A,B,C,D\subseteq \Vc$ be disjoint. Then we hope to have the following rules in the general measure-theoretic setting:
\begin{enumerate}
    \item If $A\, \textcolor{blue}{\sep{}{\Gf_{\Do(D)}}}\, B\mid  C\cup D$, then we have
    \[
        \Prb_\Mc(X_A\mid X_B, X_C \miid \Do(X_{D}))\,\textcolor{blue}{=}\,\Prb_\Mc(X_A\mid X_C \miid \Do(X_{D})).
    \]
    \item If $A\, \textcolor{blue}{\sep{}{\Gf_{\Do(I_B,D)}}}\, I_B\mid B\cup C\cup D$, then we have
    \[
        \Prb_\Mc(X_A\mid X_C \miid \Do(X_{B},X_{D}))\,\textcolor{blue}{=}\,\Prb_\Mc(X_A\mid X_B, X_C \miid \Do(X_{D})).
    \]
    \item If $A\,\textcolor{blue}{\sep{}{\Gf_{\Do(I_B,D)}}}\, I_B\mid  C\cup D$, then we have
    \[
        \Prb_\Mc(X_A\mid X_C \miid \Do(X_{B},X_{D}))\,\textcolor{blue}{=}\,\Prb_\Mc(X_A\mid X_C \miid \Do(X_{D})).
    \]
\end{enumerate}
\end{motivation}

For an arbitrary iADMG $\Gf=(\Ic,\Vc,\Ec)$ and $A\subseteq \Vc$ and $B,C \subseteq \Ic\cup \Vc$, if we have a global Markov property:
\[
A\, \textcolor{blue}{\sep{}{\Gf}}\, B\mid C \quad \Longrightarrow \quad X_A\, \textcolor{blue}{\ind}\, X_B\mid X_C,
\]
then the above rules should follow. Now the question reduces to: (i) finding the appropriate definition of the graphical separation rule $\textcolor{blue}{\sep{}{\Gf}}$ and the conditional independence $\textcolor{blue}{\ind}$, and showing the corresponding Markov property; (ii) finding the conditions under which equality in an appropriate sense holds, which connects the two Markov kernels.  One important point is that $X_B$ and/or $X_C$ may be non-stochastic variables, and therefore we need a new notion of conditional independence that can deal with non-stochastic variables properly. Note that classical stochastic conditional independence and attempts to reduce the problem to the case of stochastic conditional independence are fallacious; see \cref{sec:forre}.

 We first present some examples to show the subtlety behind point (ii). \cref{ex:no_pointwise_ident} shows that the equality is not a pointwise equality in general even if the causal calculus rules allow us to identify a kernel involving ``do'' in terms of a ``do-free'' kernel. \cref{ex:fail_back_door} shows that identification is valid only if some appropriate positivity condition holds. The issue of the positivity condition have already been identified in the literature (see, e.g., \cite{forre2025mathematical,kivva2022revisitid}), but their examples are about discrete variables and the example here involves continuous variables.

\begin{example}[No pointwise identification in general]\label{ex:no_pointwise_ident}
    Consider a CBN 
    \[
    \Mc=\big(\Df,\big\{\Prb_v(X_v\miid X_{\pa_{\Df}(v)})\big\}\big)
    \]
    where $\Df$ is shown in \cref{fig:no_pointwise_ident} and 
    \[
        \Prb_a(X_a)=\Uni\{[0,1]\} \quad \text{ and } \quad \Prb_b(X_b\miid X_a=x_a)=\begin{cases}
        \Uni\{[0,x_a]\}, \text{ if } x_a\in [0,1]\sm \Qb,\\
        \delta_{x_a}, \text{ if } x_a\in [0,1]\cap \Qb.
    \end{cases}
    \]
 Then we have the interventional kernel
    $
    \Prb_\Mc(X_b\miid \Do(X_a=x_a))=\Prb_b(X_b\miid X_a=x_a).
    $
    A version of the conditional distribution is 
    \[
        \Prb_\Mc(X_b\mid X_a=x_a)=\Uni\{[0,x_a]\}.
    \]
    Note that for all $x_a\in \Qb\cap(0,1]$
    \[
        \Prb_\Mc(X_b\miid \Do(X_a=x_a))\ne \Prb_\Mc(X_b\mid X_a=x_a).
    \]
    So, as we can see, the identification result does not hold pointwise in general.
    \begin{figure}[ht]
\centering
\begin{tikzpicture}[scale=0.9, transform shape]
    \node[ndout] (a) at (0,0) {$a$};
    \node[ndout] (b) at (1.5,0) {$b$};
    \draw[arout] (a) to (b);
    \node at (0.75,-1) {$\Df$};
\end{tikzpicture}
\caption{Causal graph $\Df$ of the CBN $\Mc$ in \cref{ex:no_pointwise_ident}.}
\label{fig:no_pointwise_ident}
\end{figure}
\end{example}

\begin{example}[Failure of back-door adjustment without appropriate positivity condition]\label{ex:fail_back_door}
 Consider a CBN 
    \[
    \Mc=\big(\Df,\big\{\Prb_v(X_v\miid X_{\pa_{\Df}(v)})\big\}\big)
    \]
    where $\Df$ is shown in \cref{fig:fail_back_door} and 
    \[
        \begin{aligned}
            \Prb_c(X_c)&=\Uni\{[0,1]\}\\
            \Prb_a(X_a\miid X_c=x_c)&=\Prb(\Ibbm{\{x_c\geq 0.5\}}U_a\miid X_c=x_c)\\
            \Prb_b(X_b\miid X_a=x_a, X_c=x_c)&=\Prb(x_a+x_cU_b\miid X_a=x_a, X_c=x_c),
        \end{aligned}
    \]
    where $\Prb(U_a,U_b)= \Uni\{[0,1]\}\otimes \Uni\{[-0.5,0.5]\}$. The joint observational distribution is
    \[
        \begin{aligned}
            &\Prb_{\Mc}(X_a\in \dr x_a, X_b\in \dr x_b, X_c\in \dr x_c)\\
            &= \big(\Prb_b(X_b\miid X_a, X_c)\otimes \Prb_a(X_a\miid X_c)\otimes \Prb_c(X_c)\big)(\dr x_a,\dr x_b,\dr x_c)\\
            &=\frac{1}{x_c}\delta_0(\dr x_a)\Ibbm{\{-0.5x_c\leq x_b\leq 0.5x_c\}}\Ibbm{\{0< x_c< 0.5\}}\dr x_b\dr x_c\\
            &\quad +\frac{1}{x_c}\Ibbm{\{0\leq x_a\leq 1\}}\Ibbm{\{x_a-0.5x_c\leq x_b\leq x_a+0.5x_c\}}\Ibbm{\{0.5\leq x_c\leq 1\}} \dr x_a\dr x_b\dr x_c.
        \end{aligned}
    \]
    One choice of the conditional distribution of $X_b$ given $X_a$ and $X_c$ is
    \[
        \begin{aligned}
            &\Prb_\Mc(X_b\in \dr x_b\mid X_a=x_a,X_c=x_c)\\
            &= \frac{1}{x_c}\Ibbm{\{-0.5x_c\leq x_b\leq 0.5x_c\}}\Ibbm{\{0< x_c< 0.5\}}\dr x_b+\Ibbm{\{x_c=0\}}\delta_0(\dr x_b)\\
            & \quad +\frac{1}{x_c}\Ibbm{\{x_a-0.5x_c\leq x_b\leq x_a+0.5x_c\}}\Ibbm{\{0.5 \leq x_c\leq 1\}}\dr x_b
        \end{aligned}
    \]
    From $\Mc$, we can compute the interventional kernel 
    \[
        \begin{aligned}
            &\Prb_\Mc(X_b\in \dr x_b\miid \Do(X_a=x_a))\\
            &=\Prb_b(X_b\in \dr x_b\miid X_a=x_a, X_c)\otimes \Prb_c(X_c)\\
            &=-\log(2|x_b-x_a|) \Ibbm{\{2|x_b-x_a|\leq 1\}}\dr x_b.
        \end{aligned}
    \]
    Note that 
    \[   
        \begin{aligned}
        &\Prb_\Mc(X_b\in \dr x_b\mid X_a=x_a, X_c)\circ\Prb_\Mc(X_c)\\
        &= -\log(2|x_b-x_a|) \Ibbm{\{0.5\leq 2|x_b-x_a|\leq 1\}}\dr x_b\\
        &\quad +\log(2) \Ibbm{\{2|x_b-x_a|< 0.5\}}\dr x_b-\log(4|x_b|)\Ibbm{\{2|x_b|< 0.5\}}\dr x_b.
        \end{aligned}
    \]
    So, \emph{for all} $x_a\in (0,1]$,
    \[
        \Prb_\Mc(X_b\miid \Do(X_a=x_a))\ne \Prb_\Mc(X_b\mid X_a=x_a, X_c)\circ\Prb_\Mc(X_c).
    \]
    
    \begin{figure}
    \centering
    \begin{tikzpicture}[scale=0.9, transform shape]
    \node[ndout] (c) at (0.75,1) {$c$};
    \node[ndout] (a) at (0,0) {$a$};
    \node[ndout] (b) at (1.5,0) {$b$};
    \draw[arout] (a) to (b);
    \draw[arout] (c) to (a);
    \draw[arout] (c) to (b);
    \node at (0.75,-1) {$\Df$};
\end{tikzpicture}
\caption{Causal graph $\Df$ of the CBN $\Mc$ in \cref{ex:fail_back_door}.}
\label{fig:fail_back_door}
\end{figure}
\end{example}


This shows that the formulation of the two rules in \cref{motivation:calculus} should be upgraded to that if certain graphical separation holds, then under certain positivity assumptions, the Markov kernel on the left is equal to the one on the right up to some points (hopefully) in a small set. We return to point (ii) in \cref{sec:causal_identification,sec:more_causal_calculus}.

\subsection{Sufficiency, ancillarity and adequacy of statistics}

\begin{motivation}\label{motivation:statistics}
    Sufficiency, ancillarity and adequacy of statistics should admit a formulation in terms of conditional independence. Let $\{\Prb(X,Y\miid \vartheta=\theta)\}_{\theta\in \Theta}$ be a statistical models. Let $S$ be a statistic of $X$. Then we have
    \begin{enumerate}
        \item $S$ is an ancillary statistic of $X$ \wrt $\vartheta$ iff $S\, \textcolor{blue}{\ind}\, \vartheta$.
        \item $S$ is a sufficient statistic of $X$ \wrt $\vartheta$ iff $X\, \textcolor{blue}{\ind}\, \vartheta\mid S$.
        \item $S$ is an adequate statistic of $X$ for $Y$ \wrt $\vartheta$ iff $X\, \textcolor{blue}{\ind}\, \vartheta,Y\mid S$.
    \end{enumerate}
\end{motivation}

\section{Forré's approach}\label{sec:forre}

We discuss Forré’s approach to addressing the problems raised in \cref{motivation:calculus,motivation:statistics}. Its theoretical foundation is given by transitional conditional independence and the associated Markov property. After explaining how these problems are resolved within this framework, we briefly discuss why certain alternative approaches fail to achieve the desired goals. Finally, we comment on the asymmetric nature of transitional conditional independence and its connections to Dawid's notion of conditional independence for statistical operations.

\subsection{Forré's transitional conditional independence}

The content of this subsection is based on \cite{forre2021transitional,forre2025mathematical}.

First, recall that we want a notion of conditional independence that can deal with \cref{motivation:calculus,motivation:statistics}, which accommodate both stochastic and non-stochastic variables. Let $X:\Wc\to \Xc$ be a random variable defined on probability space $(\Wc,\Sigma_\Wc,\Prb(W))$ and $\vartheta:\Tc\to \Theta$ a non-stochastic variable defined on measurable space $(\Tc,\Sigma_\Tc)$. Then we can define 
    \begin{enumerate}
        \item $X^*:(\Wc\times \Tc,\Sigma_{\Wc}\otimes \Sigma_{\Tc})\to \Xc$ as $X^*(w,t)=X(w)$, and
        \item $\vartheta^*:(\Wc\times \Tc,\Sigma_{\Wc}\otimes \Sigma_{\Tc})\to \Theta$  as $\vartheta^*(w,t)=\vartheta(t)$. 
    \end{enumerate}
    Hence, it is convenient to work in the following ground framework \cite{forre2021transitional}. 

\begin{definition}[Transitional probability space and transitional random variable]
  Let $\Kr(W\miid T)$ be a Markov kernel from $(\Tc,\Sigma_\Tc)$ to $(\Wc,\Sigma_{\Wc})$. Then we call the tuple $(\Wc\times \Tc,\Sigma_\Wc\otimes \Sigma_\Tc,\Kr(W\miid T))$ a \textbf{transitional probability space}. A measurable map
  $
    X: \mathcal{W} \times \mathcal{T} \rightarrow \Xc
  $
  is called a \textbf{transitional random variable}.
\end{definition}

This generalizes the the notions of probability space, random variable, and non-stochastic variable. If $\Tc=\{\star\}$, then $(\Wc\times \Tc,\Kr(W\miid T))$ is a probability space and $X$ is a random variable. If $\Wc=\{\star\}$, then $X$ is a non-stochastic variable. One can consider a transitional random variable as a family of random variables (measurably) parameterized by $t \in \Tc$.
For $t \in \Tc$ we define the measurable map:
\[ X_t:\, \mathcal{W} \rightarrow \mathcal{X},\quad w \mapsto X_t(w)\coloneqq X(w,t),\]
which can be considered a random variable on the probability space $(\Wc,\mathrm{K}(W\miid T=t))$.

Now we can state the definition of Forré's transitional conditional independence \cite[Definition~3.1]{forre2021transitional}.

\begin{definition}[Transitional conditional independence]\label{def:tran_ci}
Let $(\Wc \times \Tc, \mathrm{K}(W\miid T))$ be a transitional probability space. Consider transitional random variables:
\[ X: \Wc \times \Tc \rightarrow \Xc, \qquad Y:\,\Wc \times \Tc \rightarrow \mathcal{Y}, \qquad
Z:\,\Wc \times \Tc \rightarrow \mathcal{Z}.\]
We say that \textbf{$X$ is conditionally independent of $Y$ given $Z$ w.r.t.\ $\Kr(W\miid T)$}, in symbols:
\[X \Ind{\mathsf{F}}{\mathrm{K}(W\miid T)} Y \mid Z, \]
if there exists a Markov kernel $\Qr(X\miid Z):\; \Zc \dashrightarrow \Xc,$ such that:
\[ \mathrm{K}(X,Y,Z\miid T) = \mathrm{Q}(X\miid Z) \otimes \mathrm{K}(Y,Z\miid T ),\]
where $\mathrm{K}(Y,Z\miid T)$ is the marginal of $\mathrm{K}(X,Y,Z\miid T)$.
As a special case, we define:
\[X  \Ind{\mathsf{F}}{\mathrm{K}(W\miid T)} Y \qquad :\iff \qquad X \Ind{\mathsf{F}}{\mathrm{K}(W\miid T)} Y \mid \ast. \]
\end{definition}

This notion of conditional independence admits a natural generalization to Markov categories via its elegant factorization-based definition \cite[Definition~16]{fritz23dsep}.

\begin{remark}[Essential uniqueness]
  The Markov kernel $\mathrm{Q}(X\miid Z)$ appearing in the conditional independence $X  \Ind{\mathsf{F}}{\mathrm{K}(W\miid T)} Y \mid Z$ in definition \ref{def:tran_ci} is then a version of a conditional Markov kernel $\mathrm{K}(X\mid Y,Z\miid T)$ and is thus essentially unique in the sense that for every measurable subset $A\subseteq \Xc$, the set
  \[
    N_A:=\{(t,y,z)\in\Tc\times\mathcal{Y}\times\mathcal{Z} \mid \mathrm{K}(X\in A\mid Y=y,Z=z\miid T=t)\ne \mathrm{Q}(X\in A\miid Z=z) \}
  \]
  is a measurable $\mathrm{K}(Y,Z\miid T)$-null set.
\end{remark}

\begin{definition}[Graphical separation]\label{def:graph_sep}
Let $\Gf=(\Ic,\Vc,\Ec)$ be an iADMG. Let $A, B, C \subseteq \Ic\cup \Vc$ be (not necessarily disjoint) subsets of nodes. We then say that
 \textbf{$A$ is $id$-separated from $B$ given $C$ in $\Gf$}, in symbols:
$$
A \sep{\mathsf{id}}{\Gf} B \mid C,
$$
if every path from a node in $A$ to a node in $B\cup \Ic$ is $d$-blocked by $C$ (being $d$-blocked is according to the usual definition of $d$-separation in the literature \cite{pearl2009causality,richardson03markov_admg}). 
\end{definition}

\begin{theorem}[Asymmetric separoid rules {\protect\cite[Theorems~3.1, 5.11]{forre2021transitional}}]
    The transitional conditional independence (\cref{def:tran_ci}) and the graphical separation rule (\cref{def:graph_sep}) both satisfy the asymmetric separoid rules. 
\end{theorem}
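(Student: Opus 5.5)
The plan is to first fix the list of asymmetric separoid rules in the form used in \cite{forre2021transitional}: left and right redundancy, left and right decomposition, left and right weak union, left and right contraction, the cross-contraction variants, and restricted symmetry. For restricted symmetry, $X \ind Y \mid Z$ should imply $Y \ind X \mid Z$ whenever $Y$ is deterministic given $T$, i.e.\ $T$-restricted. I would then verify each rule twice: once for $\Ind{\mathsf{F}}{\Kr(W\miid T)}$ using only the factorization in \cref{def:tran_ci}, and once for $\sep{\mathsf{id}}{\Gf}$ by path arguments.

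\textbf{Transitional CI.} Each rule becomes a construction of a witnessing kernel $\Qr$ out of given ones, using the operations of \cref{defthm:prob_calculus}.
\begin{itemize}
\item \emph{Left redundancy} ($X\ind Y\mid X$): take $\Qr(X\miid X=x)=\delta_x$.
\item \emph{Right redundancy} ($X \ind \ast \mid Z$ only in the $T$-restricted form): take $\Qr$ to be the marginal itself when it does not depend on $T$.
\item \emph{Decomposition}: marginalize both sides of $\Kr(X,W,Y,Z\miid T)=\Qr(X,W\miid Z)\otimes\Kr(Y,Z\miid T)$ over the redundant variable. On the right, marginalizing $Y$ out of $\Kr(Y,W,Z\miid T)$ is harmless because $\Qr(X\miid Z)$ does not depend on it.
\item \emph{Right weak union} ($X\ind Y,W\mid Z\Rightarrow X\ind Y\mid W,Z$): reuse $\Qr(X\miid Z)$, viewed as a kernel in $(W,Z)$ that is constant in $W$.
\item \emph{Left weak union} ($X,W\ind Y\mid Z\Rightarrow X\ind Y\mid W,Z$): disintegrate $\Qr(X,W\miid Z)=\Qr(X\mid W\miid Z)\otimes\Qr(W\miid Z)$, which is possible since the spaces are standard. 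Then regroup $\Qr(X\mid W\miid Z)\otimes\big(\Qr(W\miid Z)\otimes\Kr(Y,Z\miid T)\big)$ and identify the bracket with $\Kr(W,Y,Z\miid T)$ by marginalizing $X$.
\item \emph{Contraction}: given $\Kr(X,Y,W,Z\miid T)=\Qr_1(X\miid W,Z)\otimes\Kr(Y,W,Z\miid T)$ and $\Kr(W,Y,Z\miid T)=\Qr_2(W\miid Z)\otimes\Kr(Y,Z\miid T)$, substitute the second into the first. Associativity of $\otimes$ then yields the witness $\Qr_1\otimes\Qr_2$ for $X,W\ind Y\mid Z$.
\end{itemize}
Associativity of $\otimes$ and the fact that marginalization commutes with $\otimes$ on the non-conditioning variables should be checked once by Fubini and then used throughout.

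\textbf{Graphical separation.} I would work directly with the definition: every path from $A$ to $B\cup\Ic$ is $d$-blocked by $C$.
\begin{itemize}
\item \emph{Redundancy}: a path starting in $A\subseteq C$ has its endpoint in $C$, so it is blocked; for $B=\emptyset$ only paths to $\Ic$ remain, which gives exactly the restricted form.
\item \emph{Decomposition}: this is monotonicity in the endpoint sets.
\item \emph{Weak union and contraction}: these follow the standard $d$-separation arguments for ADMGs \cite{richardson03markov_admg}. Take a $d$-open path given $C\cup W$ from $A$ to $B\cup\Ic$, and locate its first node in $W$ or its first collider whose descendant set meets $W$. Splitting the path there produces an open path witnessing a violation of one of the hypotheses.
\end{itemize}

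\textbf{Expected obstacles.} On the probabilistic side, the cross-contraction and restricted-symmetry rules are the hardest. There the roles of the kernel's input $T$ and a conditioning variable swap, so one must show that a kernel depending on $(y,z)$ can be replaced by one depending on $z$ alone. The essential uniqueness from \cref{defthm:prob_calculus} only yields agreement off a $\Kr(Y,Z\miid T)$-null set for each $t$. My first attempt would be to use the deterministic $T$-dependence of $Y$ to define $\Qr$ pointwise through a measurable section, and then check that the resulting exceptional set is null uniformly in $t$. On the graphical side, the corresponding obstacle is the careful case analysis of paths ending in input nodes, since those never carry incoming arrows.
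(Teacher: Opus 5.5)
Your overall route is the one behind Forré's cited theorems (the note itself gives no proof): verify each rule separately, building witnessing kernels from the factorization in \cref{def:tran_ci}, and using path-blocking arguments for $id$-separation. Your treatment of left redundancy, both decompositions, both weak unions and left contraction is fine. The gaps are in exactly the rules you defer, and one rule is false as you state it. Below, $\ind$ means $\Ind{\mathsf{F}}{\Kr(W\miid T)}$ and $U$ is an extra variable.

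\emph{Restricted symmetry.} Your version, ``$X\ind Y\mid Z\Rightarrow Y\ind X\mid Z$ when $Y$ is deterministic given $T$'', fails. Take $\Tc=\{0,1\}$, $\Kr(W\miid T=t)$ uniform on $\Wc=[0,1]$, $X(w,t)=w$, $Y(w,t)=t$, $Z=\ast$. Then $X\ind Y$ holds with the uniform witness. But $Y\ind X$ would force a single law $\Qr(Y)$ equal to $\Kr(Y\miid T=t)=\delta_t$ for both $t$. The note's $\vartheta\notind{\mathsf{F}}{\Prb(W\miid\vartheta)}X\mid S$ is the same phenomenon. The side condition that works is the opposite one, $Y\ind\ast\mid Z$. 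With witnesses $\Qr(X\miid Z)$ and $\mathrm{R}(Y\miid Z)$, Fubini gives
\[
\Kr(X,Y,Z\miid T)=\Qr(X\miid Z)\otimes\mathrm{R}(Y\miid Z)\otimes\Kr(Z\miid T)=\mathrm{R}(Y\miid Z)\otimes\Qr(X\miid Z)\otimes\Kr(Z\miid T),
\]
and the right-hand side is $\mathrm{R}(Y\miid Z)\otimes\Kr(X,Z\miid T)$. Similarly, your right redundancy (``take the marginal when it does not depend on $T$'') presupposes its conclusion. The form that always holds has $T$ (graphically, $\Ic$) in the conditioning set, with $\Kr(X\miid T)$ itself as witness.

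\emph{Contraction.} You list right contraction but prove only left contraction, and right contraction is where essential uniqueness is genuinely needed. Suppose $X\ind U\mid Y,Z$ with witness $\Qr_1(X\miid Y,Z)$ and $X\ind Y\mid Z$ with witness $\Qr_2(X\miid Z)$. Marginalizing $U$ shows that both disintegrate $\Kr(X,Y,Z\miid T=t)$ for every $t$. So the $T$-free measurable set $N=\{(y,z):\Qr_1(\cdot\miid y,z)\neq\Qr_2(\cdot\miid z)\}$ is $\Kr(Y,Z\miid T=t)$-null for each $t$, and $\Qr_2$ may replace $\Qr_1$ in the factorization of $\Kr(X,U,Y,Z\miid T)$.

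This also shows your diagnosis of the cross contractions is off. Kernel equalities are checked at each $t$ separately and the witnesses are $T$-free, so nullness for each $t$ is exactly enough. No uniformity in $t$ is needed, no measurable section, and no rule assumes $Y$ is deterministic in $T$.

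\emph{Cross contractions.} Right cross contraction says $X\ind U\mid Y,Z$ and $Y\ind X\mid Z$ imply $X\ind U,Y\mid Z$. Marginalizing gives $\Kr(X,Y,Z\miid T)=\Qr_1(X\miid Y,Z)\otimes\mathrm{R}(Y\miid Z)\otimes\Kr(Z\miid T)$. Its $X$-marginal yields $X\ind\ast\mid Z$ with a $T$-free witness. The corrected restricted symmetry then turns $Y\ind X\mid Z$ into $X\ind Y\mid Z$, and right contraction finishes. Flipped left cross contraction ($X\ind U\mid Y,Z$ and $U\ind Y\mid Z$ imply $U\ind X,Y\mid Z$) is substitution plus the same Fubini swap.

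\emph{Graphical side.} Your redundancy argument needs the convention that a path with an endpoint in $C$ counts as blocked; the note's ``usual definition'' leaves this implicit. A shortcut: $A\sep{\mathsf{id}}{\Gf}B\mid C$ just says $C$ $d$-separates $A$ from $B\cup\Ic$. Hence every asymmetric rule, including the corrected ones, follows from the symmetric semi-graphoid rules of $d$-separation applied with $\Ic$ appended to the right-hand set.
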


\begin{theorem}[Strong global Markov property{\protect\cite[Theorem~6.3]{forre2021transitional}}]\label{thm:markov_property}
    Let 
    \[
    \Mc=\big(\Df=(\Ic,\Vc,\Lc,\Ec),\big\{\Prb_v(X_v\miid X_{\pa_{\Df}(v)})\big\}_{v\in \Vc\dcup \Lc}\big)
    \]
    be an L-iCBN and $A,B,C\subseteq \Ic\cup \Vc$. Set $\Af\coloneqq \Df_{\sm \Lc}$. Then we have
    \[
        A\sep{\mathsf{id}}{\Af} B\mid C \quad \Longrightarrow \quad X_A\Ind{\mathsf{F}}{\Prb_{\Mc}(X_\Vc\miid X_{\Ic})} X_B\mid  X_C.
    \]
\end{theorem}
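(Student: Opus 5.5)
The plan is to prove the statement in two stages. First establish the ordinary \emph{local/ordered} Markov property for the full L-iDAG $\Df$ with respect to the product kernel $\Kr(X_{\Vc\cup\Lc}\miid X_\Ic)\coloneqq\bigotimes^{\succ}_{v\in\Vc\dcup\Lc}\Prb_v(X_v\miid X_{\pa_{\Df}(v)})$. Then transport it along the asymmetric separoid rules (the preceding theorem) to arbitrary $id$-separations. Finally marginalize out $\Lc$.

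Concretely, fix a topological order $\prec$ on $\Df$. By the definition of the product kernel, for each $v\in\Vc\dcup\Lc$ the marginal of $\Kr$ on $v$ together with its predecessors factors as
\[
\Kr(X_v,X_{\mathrm{pred}(v)}\miid X_\Ic)=\Prb_v(X_v\miid X_{\pa_{\Df}(v)})\otimes\Kr(X_{\mathrm{pred}(v)}\miid X_\Ic).
\]
This is literally the factorization in \cref{def:tran_ci}. It yields
\[
X_v\Ind{\mathsf F}{\Kr}X_{\mathrm{pred}(v)\cup\Ic}\mid X_{\pa_{\Df}(v)} .
\]
(Input variables are handled by the convention that $\Ic$ is always on the "right" side, matching the requirement in \cref{def:graph_sep} that paths to $\Ic$ be blocked.) This ordered local Markov property is the base case.

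Next, show that any $id$-separation $A\sep{\mathsf{id}}{\Df}B\mid C$ can be derived from these ordered local statements using only the asymmetric separoid rules: left/right decomposition, weak union, contraction, and the rules involving $T$. This mirrors the classical proof (Lauritzen; Verma–Pearl) that the local Markov property implies the global one for DAGs, which proceeds by induction on the number of nodes. Remove a $\prec$-maximal node $w$ and consider three cases:
\begin{enumerate}
\item $w\notin A\cup B\cup C$: marginalize $w$ away, since the marginal kernel is again a product kernel for $\Df\sm w$ and separation is inherited.
\item $w\in A$: use contraction with the local statement for $w$.
\item $w\in B\cup C$: use weak union and decomposition.
\end{enumerate}
Because transitional CI is asymmetric (no symmetry axiom), each case must be arranged so that the "stochastic" variables stay on the left. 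This bookkeeping is where I expect the main obstacle. The classical proof uses symmetry freely, so I will instead prove the slightly stronger inductive statement that for all $A\subseteq\Vc\cup\Lc$ and $B,C$ with $A\sep{\mathsf{id}}{\Df}B\mid C$, we get $X_{A\sm C}\Ind{\mathsf F}{\Kr}X_{B\cup\Ic}\mid X_C$. The case where $A$ and $C$ split $w$'s parents then needs the \emph{left} versions of weak union and contraction. Those are supplied by the asymmetric separoid theorem stated above, and the measurability of the constructed $\Qr$ follows from composing kernels.

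Finally, pass from $\Df$ to $\Af=\Df_{\sm\Lc}$. The marginalization (latent projection) preserves $id$-separation among $\Ic\cup\Vc$, i.e.\ $A\sep{\mathsf{id}}{\Af}B\mid C$ iff $A\sep{\mathsf{id}}{\Df}B\mid C$ for $A,B,C\subseteq\Ic\cup\Vc$; this is the standard property of latent projection, checked path by path. Marginalizing $\Kr$ over $\Xc_\Lc$ gives $\Prb_\Mc(X_\Vc\miid X_\Ic)$. Since a factorization $\Kr(X_A,X_B,X_C\miid X_\Ic)=\Qr\otimes\Kr(X_B,X_C\miid X_\Ic)$ involves only marginals on $A\cup B\cup C\subseteq\Ic\cup\Vc$, it holds verbatim for $\Prb_\Mc$. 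For components of $A,B,C$ that lie in $\Ic$, one appends the deterministic copy kernel $\delta_{X_\Ic}$; this preserves the factorization, which yields the claim.
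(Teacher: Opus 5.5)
The notes give no proof of this statement; they cite Forré's Theorem~6.3. Your overall architecture is reasonable: local factorization of the full product kernel, a local-to-global induction over a $\prec$-maximal node using asymmetric separoid rules, then latent projection and marginalization over $\Xc_\Lc$. The base case and the final marginalization step are fine. The gap is in the inductive step, which carries all the content. As you describe it, that step fails for $w\in B\cup C$.

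\textbf{Case $w\in C\sm(A\cup B)$.} Put $C'\coloneqq C\sm\{w\}$. The hypothesis on $\Df\sm w$ yields only statements not involving $X_w$, and the local statement has $X_w$ on the left. Weak union and decomposition cannot add $X_w$ to the conditioning set unless $X_w$ already sits on one side of a proved statement. So you must first prove $X_{A\cup\{w\}}\Ind{\mathsf F}{\Kr}X_{B\cup\Ic}\mid X_{C'}$ or $X_A\Ind{\mathsf F}{\Kr}X_{B\cup\Ic\cup\{w\}}\mid X_{C'}$. Choosing between them needs an explaining-away lemma you have not mentioned.
\begin{itemize}
\item Let $P\coloneqq\pa_\Df(w)\sm C$. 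Since $w\in C$ is an open collider, any two nodes of $P$ are $d$-connected given $C$, and $P\subseteq\anc(C)$.
\item Hence either no node of $P$ is $d$-connected to $A$ given $C$, or none is $d$-connected to $B\cup\Ic$ given $C$. Otherwise concatenation gives a $d$-connecting walk from $A$ to $B\cup\Ic$.
\item In the first case $A\sep{\mathsf{id}}{\Df}B\cup\{w\}\mid C'$. In the second case $A\cup\{w\}\sep{\mathsf{id}}{\Df}B\mid C'$.
\item You then invoke the case $w\in B$ (respectively $w\in A$) and finish with right (respectively left) weak union.
\end{itemize}

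\textbf{Case $w\in B\sm C$.} Here the asymmetry really bites. $X_w$ occurs only on the left of its local statement and not at all in the inductive hypothesis. None of weak union, decomposition or (left/right) contraction moves a variable from the left of a premise to the right of the conclusion. You need a cross rule. For instance, with $A\cap C=\emptyset$: $X_w\Ind{\mathsf F}{\Kr}X_A\mid X_Y,X_C$ and $X_A\Ind{\mathsf F}{\Kr}X_Y\mid X_C$ imply $X_A\Ind{\mathsf F}{\Kr}X_Y,X_w\mid X_C$. This is checked directly by commuting the factor $\Prb_w$ past $\Qr(X_A\miid X_C)$. Apply it with $Y\coloneqq (B\sm\{w\})\cup\Ic\cup(\pa_\Df(w)\sm C)$. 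That in turn needs the graphical fact that, for a sink $w\notin C$, $A\sep{\mathsf{id}}{\Df}B\mid C$ implies $A\sep{\mathsf{id}}{\Df}B\cup(\pa_\Df(w)\sm C)\mid C$.

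\textbf{Case $w\in A\sm C$.} The mirror fact, $A\cup(\pa_\Df(w)\sm C)\sep{\mathsf{id}}{\Df}B\mid C$, is what justifies your tacit assumption $\pa_\Df(w)\subseteq A\cup C$ in this case.

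\textbf{Minor point.} $\Qr$ may not read coordinates of $X_\Ic$ outside $X_C$. For $A\cap\Ic$ you must use that the separation (via trivial paths) forces $A\cap\Ic\subseteq C$, and copy those coordinates from $X_C$ rather than appending $\delta_{X_\Ic}$.
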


\subsection{Causal identification results}\label{sec:causal_identification}

From \cref{thm:markov_property}, one can prove the following version of causal calculus in the general measure-theoretic setting \cite{forre2021transitional,forre2025mathematical}.

\begin{theorem}[Causal calculus (ADMGs)]\label{defthm:causal_calculus}
 Let 
    \[
    \Mc=\big(\Df=(\Vc,\Lc,\Ec),\big\{\Prb_v(X_v\miid X_{\pa_{\Df}(v)})\big\}_{v\in \Vc\dcup \Lc}\big)
    \]
    be an L-CBN, and let $\Af\coloneqq \Df_{\sm \Lc}$ be the observational ADMG of $\Mc$. Let $A,B,C,D\subseteq \Vc$ be disjoint. Assume that there are $\sigma$-finite reference measures $\mu_v$ on $\Xc_v$ for each $v\in \Vc$ (write $\mu_F\coloneqq \bigotimes_{v\in F}\mu_v$ for $F\subseteq \Vc$). 
     \begin{enumerate}
        \item Insertion/deletion of observation: Suppose $A\sep{\mathsf{id}}{\Af_{\Do(D)}} B\mid C\cup D$. Then there exists a unique Markov kernel $\Qr(X_A\miid X_C,X_D)$, up to a measurable $\Prb_\Mc(X_C \miid \Do(X_{D}))$-null set in $\Xc_{C\cup D}$, which is a version of $\Prb_{\Mc}(X_A\mid X_{B_1},X_C \miid \Do(X_{D}))$ for every $B_1\subseteq B$ simultaneously. If $\mu_{B\cup C}\ll \Prb_\Mc(X_B,X_C \miid \Do(X_{D}))\ll \mu_{B\cup C}$, then equality holds
        \[ 
            \Prb_\Mc(X_A\mid X_B=x_B,X_C=x_C \miid \Do(X_{D}=x_D))=\Prb_\Mc(X_A\mid X_C=x_C \miid \Do(X_{D}=x_D)) 
        \]
        for all $(x_B,x_C,x_D)\in (\Xc_B\times \Xc_C\times \Xc_D)\sm N$ where $N\subseteq \Xc_{B\cup C\cup D}$ is a measurable set such that $\mu_{B\cup C}(N_{x_D})=0$ for all $x_D\in \Xc_D$. Here, equality means equality as probability measures on $\Xc_A$.
 
        \item Action/observation exchange: Suppose $A\sep{\mathsf{id}}{\Af_{\Do(I_B,D)}} I_B\mid B\cup C\cup D$. Then there exists a unique Markov kernel $\Qr(X_A\miid X_B,X_C,X_D)$, up to a measurable $\Prb_\Mc(X_{B},X_C \miid \Do(X_{I_B},X_{D}))$-null set $N\subseteq \Xc_{B\cup C\cup D}$,\footnote{It means that $N$ is a $\Prb_{\Mc}(X_{B_1},X_C\miid \Do(X_{B_2},X_D))$-null set for every decomposition $B=B_1\dcup B_2$ simultaneously.} which is a version of $\Prb_{\Mc}(X_A\mid X_{B_1},X_C \miid \Do(X_{B_2},X_{D}))$ for every decomposition $B=B_1\dcup B_2$ simultaneously. If $\mu_{B\cup C}\ll \Prb_\Mc(X_B,X_C \miid  \Do(X_{D}))\ll \mu_{B\cup C}$ and $\mu_C\ll \Prb_\Mc(X_C \miid  \Do(X_B,X_{D}))\ll \mu_C$, then the equality holds
        \[
                \Prb_\Mc(X_A\mid X_C=x_C \miid \Do(X_{B}=x_B,X_{D}=x_D))=\Prb_\Mc(X_A\mid X_B=x_B, X_C=x_C \miid \Do(X_{D}=x_D)) 
        \]
        for all $(x_B,x_C,x_D)\in (\Xc_B\times \Xc_C\times \Xc_D)\sm \tilde{N}$ where $\tilde{N}\subseteq \Xc_{B\cup C\cup D}$ is a measurable set such that $\mu_{B\cup C}(\tilde{N}_{x_D})=0$ for all $x_D\in \Xc_D$. Here, equality means equality as probability measures on $\Xc_A$.

        \item Insertion/observation of action: Suppose $A\sep{\mathsf{id}}{\Af_{\Do(I_B,D)}} I_B \mid C\cup D$. Then there exists a unique Markov kernel $\Qr(X_A\miid X_C,X_D)$, up to a measurable  $\Prb_\Mc(X_C \miid \Do(X_{I_B},X_{D}))$-null set $N\times \Xc_{I_B}\subseteq \Xc_{C\cup D}\times \Xc_{I_B}$, which is a version of $\Prb_{\Mc}(X_A\mid X_C \miid \Do(X_{B_2},X_{D}))$ for every $B_2\subseteq B$ simultaneously. If $\mu_{C}\ll \Prb_\Mc(X_C \miid \Do(X_B,X_{D}))\ll \mu_{C}$ and $\mu_C\ll \Prb_\Mc(X_C\miid \Do(X_{D}))\ll \mu_C$, then the equality holds 
        \[
            \Prb_\Mc(X_A\mid X_C=x_C \miid \Do(X_{B}=x_B,X_{D}=x_D))=\Prb_\Mc(X_A\mid X_C=x_C \miid \Do(X_{D}=x_D)) 
        \]
        for all $(x_B,x_C,x_D)\in (\Xc_B\times \Xc_C\times \Xc_D)\sm (\Xc_B\times \tilde{N})$ where $\tilde{N}\subseteq \Xc_{C\cup D}$ is a measurable set such that $\mu_{C}(\tilde{N}_{x_D})=0$ for all $x_D\in \Xc_D$. Here, equality means equality as probability measures on $\Xc_A$.
    \end{enumerate}
\end{theorem}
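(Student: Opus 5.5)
The plan is to derive all three rules from the strong global Markov property (\cref{thm:markov_property}), applied to suitably manipulated L-iCBNs. The resulting factorization statements are then upgraded to almost-everywhere equalities using the positivity assumptions together with \cref{defthm:doob-radon-nikodym}.

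For rule (1), I would pass to the hard intervened model $\Mc_{\Do(D)}$. Its observable iADMG is $\Af_{\Do(D)}$, by the commutation of hard manipulation and marginalization, and its observable kernel is $\Prb_\Mc(X_{\Vc\sm D}\miid \Do(X_D))$ with $D$ as input nodes. Applying \cref{thm:markov_property} to $A\sep{\mathsf{id}}{\Af_{\Do(D)}} B\mid C\cup D$ yields a kernel $\Qr(X_A\miid X_C,X_D)$ with
\[
\Prb_\Mc(X_A,X_B,X_C\miid \Do(X_D))=\Qr(X_A\miid X_C,X_D)\otimes \Prb_\Mc(X_B,X_C\miid \Do(X_D)).
\]
Marginalizing over any $B\sm B_1$ shows that $\Qr$ is a version of $\Prb_\Mc(X_A\mid X_{B_1},X_C\miid\Do(X_D))$ for every $B_1\subseteq B$ simultaneously. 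Essential uniqueness follows from \cref{defthm:prob_calculus}(3).

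For the pointwise statement, both $\Prb_\Mc(X_A\mid X_B,X_C\miid\Do(X_D))$ and $\Prb_\Mc(X_A\mid X_C\miid\Do(X_D))$ agree with $\Qr$ off a $\Prb_\Mc(X_B,X_C\miid\Do(X_D))$-null set. For the second kernel, the exceptional set is a $\Prb_\Mc(X_C\miid\Do(X_D))$-null set in $\Xc_{C\cup D}$, pulled back to $\Xc_{B\cup C\cup D}$; it is null for the joint kernel because the joint marginalizes to that one. Since $\mu_{B\cup C}\ll\Prb_\Mc(X_B,X_C\miid\Do(X_D))$, sections of these null sets are $\mu_{B\cup C}$-null for every $x_D$. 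The union $N$ is the required exceptional set. One technical point needs care: the set where two kernels disagree as measures must be measurable. I would handle this by using a countable generating $\pi$-system of the standard space $\Xc_A$, which makes the disagreement set a countable union of measurable sets.

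For rules (2) and (3), I would work in the model $\Mc_{\Do(I_B,D)}$ with soft interventions on $B$ and hard interventions on $D$. Its observable graph is $\Af_{\Do(I_B,D)}$ by the commutation relations. Applying \cref{thm:markov_property} gives a kernel $\Qr$ depending only on $(X_B,X_C,X_D)$ for rule (2), resp.\ $(X_C,X_D)$ for rule (3), factorizing the joint kernel with inputs $X_{I_B},X_D$.

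The key step is to specialize the input value $x_{I_b}$ per coordinate. Setting $x_{I_b}=\star$ recovers the observational mechanism of $b$, while $x_{I_b}=x_b$ forces $X_b=x_b$, which reproduces the hard intervention. Fixing a decomposition $B=B_1\dcup B_2$ and restricting inputs to those with $x_{I_{B_1}}=\star$ and $x_{I_{B_2}}=x_{B_2}$ therefore gives a kernel equal to $\Prb_\Mc(X_A,X_{B_1},X_C\miid\Do(X_{B_2},X_D))$. In this way $\Qr$ becomes a simultaneous version of all the mixed conditionals.

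Positivity then converts the ``null-set'' statements into $\mu$-a.e.\ statements. For rule (2), the two relevant null sets are a $\Prb_\Mc(X_B,X_C\miid\Do(X_D))$-null set (the case $B_1=B$) and a set that is $\Prb_\Mc(X_C\miid\Do(X_B,X_D))$-null for each $x_B$ (the case $B_2=B$). Both are transferred to $\mu_{B\cup C}$-null sections by the two positivity assumptions, using Fubini for the second. Rule (3) is analogous with $\mu_C$, and its exceptional set takes the cylinder form $\Xc_B\times\tilde N$ because $\Qr$ does not depend on $x_B$.

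I expect the main obstacle to be this bookkeeping for rules (2) and (3). The difficulty is verifying that the soft-intervened kernel restricted to the chosen input slices is exactly the hard-intervened one, a measurable identification of the $\star$/non-$\star$ branches. A related difficulty is ensuring that one null set works for all decompositions simultaneously, which requires a careful reading of the footnoted null-set notion. I would handle it by taking the finite union over the $2^{|B|}$ decompositions.
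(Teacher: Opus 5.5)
Your route is the one the paper indicates; it gives no proof beyond saying the theorem follows from \cref{thm:markov_property}, with details deferred to Forr\'e. That route is: apply the Markov property to $\Mc_{\Do(D)}$ and to $\Mc_{\Do(I_B,D)}$, recover the mixed conditionals by slicing the soft-intervention inputs ($x_{I_b}=\star$ versus $x_{I_b}=x_b$), and convert kernel-null sets into $\mu$-null sections via positivity. Your arguments for rules (1) and (2) go through.

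The gap is the cylinder form $\Xc_B\times\tilde N$ of the exceptional set in rule (3). Treating rule (3) ``analogously'' to rule (2) means comparing an arbitrary version of $\Prb_\Mc(X_A\mid X_C\miid\Do(X_B,X_D))$ with $\Qr$. The set where they differ is only known to have $\mu_C$-null $x_C$-sections for each fixed $(x_B,x_D)$. Fubini then gives $\mu_{B\cup C}$-null sections, not a set of the form $\Xc_B\times\tilde N$. The fact that $\Qr$ does not depend on $x_B$ says nothing about where that arbitrary version deviates from $\Qr$.

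In fact the cylinder claim is false for arbitrary versions. Take the graph $c\to a$ with $b$ isolated, $X_b,X_c\sim\Uni\{[0,1]\}$, $X_a\equiv 0$, and $\mu_b=\mu_c=\lambda|_{[0,1]}$. Choose the version of $\Prb_\Mc(X_a\mid X_c\miid\Do(X_b))$ equal to $\delta_1$ on the diagonal $\{x_c=x_b\}$ and $\delta_0$ elsewhere. All hypotheses of rule (3) hold. Yet this version disagrees with $\Prb_\Mc(X_a\mid X_c)=\delta_0$ exactly on the diagonal, whose projection to $\Xc_c$ is all of $[0,1]$. This is the phenomenon of \cref{ex:no_common_null_set}.

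The missing step is to fix the left-hand kernel in rule (3) to be the simultaneous version $\Qr(X_A\miid X_C,X_D)$ itself. This is exactly what transitional independence provides, and what distinguishes it from the $\overset{\mathsf{FM}}{\ind}$ approach discussed in the paper. The only exceptional set is then $\tilde N\coloneqq\{(x_C,x_D)\mid \Qr(\cdot\miid x_C,x_D)\neq\Prb_\Mc(X_A\mid X_C=x_C\miid\Do(X_D=x_D))\}$. This is a $\Prb_\Mc(X_C\miid\Do(X_D))$-null set, so its sections are $\mu_C$-null by $\mu_C\ll\Prb_\Mc(X_C\miid\Do(X_D))$. Its preimage in $\Xc_{B\cup C\cup D}$ is automatically $\Xc_B\times\tilde N$.

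You should state this choice explicitly. Note that under it, the hypothesis on $\Prb_\Mc(X_C\miid\Do(X_B,X_D))$ plays no role in the pointwise conclusion of rule (3). With an arbitrary left-hand version, that hypothesis only yields $\mu_{B\cup C}$-null sections.
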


\begin{remark}[On the absolute continuity condition (positivity condition)]
\begin{enumerate}
    \item WLOG, we can take the $\sigma$-finite reference measure $\mu$ to be a probability measure.

    \item A Markov kernel $\Kr(X \miid T)$ has a strictly positive Doob-Radon-Nikodym derivative \wrt $\sigma$-finite measure $\mu$ on $\Xc$ iff $\mu\ll \Kr(X \miid T) \ll \mu$. If $\Xc=\Tc=\Rb$ and $\mu=\lambda$, then $\mu\ll \Kr(X \miid T) \ll \mu$ states that $\Kr(X\in \dr x\miid T=t)=k(x\miid t)\dr x$ for some strictly positive density function $k(x\miid t)$ that is jointly measurable \wrt $x$ and $t$.  In the discrete case, this is equivalent to saying that $k(x \miid t)>0$ for all $x\in \Xc$ and $t\in \Tc$, where $k(\cdot \miid \cdot)$ is the probability mass function of $\Kr(X\miid T)$. See, e.g., \cite[Corollary~2.3.20]{forre2025mathematical}.
\end{enumerate}
\end{remark}

\begin{proposition}[Back-door adjustment {\protect\cite[Corollary~5.2.6]{forre2025mathematical}}]\label{prop:back-door}
    Under the setting of \cref{defthm:causal_calculus}, let $F\subseteq \Vc$. Assume  
    \[
    F\sep{\mathsf{id}}{\Af_{\Do(I_B)}} I_B,\quad  A\sep{\mathsf{id}}{\Af_{\Do(I_B)}}I_B\mid B\cup F,\quad \text{ and }\quad  \Prb_\Mc(X_F)\otimes \Prb_\Mc(X_B)\ll \Prb_\Mc(X_F,X_B).
    \]
    Then the following adjustment formulas hold:
    \[
    \begin{aligned}
        \Prb_\Mc(X_A,X_F\miid \Do(X_B))&=\Prb_\Mc(X_A\mid X_F,X_B)\otimes \Prb_\Mc(X_F) \quad \Prb_\Mc(X_B)\text{-a.s.,}\\
        \Prb_\Mc(X_A\miid \Do(X_B))&=\Prb_{\Mc}(X_A\mid X_F,X_B)\circ \Prb_{\Mc}(X_F) \quad \Prb_\Mc(X_B)\text{-a.s.}
    \end{aligned}
    \]
\end{proposition}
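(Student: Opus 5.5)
We work throughout in the soft-intervened model $\Mc_{\Do(I_B)}$, whose observable kernel $\Prb_{\Mc_{\Do(I_B)}}(X_\Vc\miid X_{I_B})$ recovers both regimes. At $x_{I_B}=\star$ it gives the observational distribution $\Prb_\Mc(X_\Vc)$. At $x_{I_B}=x_B\neq\star$ it gives $\delta_{x_B}\otimes\Prb_\Mc(X_{\Vc\sm B}\miid\Do(X_B=x_B))$. Its induced observable iADMG is $(\Df_{\Do(I_B)})_{\sm\Lc}=\Af_{\Do(I_B)}$, by the commutation of soft manipulation and marginalization. Hence \cref{thm:markov_property} applies to the two graphical hypotheses.

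First, $F\sep{\mathsf{id}}{\Af_{\Do(I_B)}} I_B$ gives $X_F\Ind{\mathsf{F}}{} X_{I_B}$. So there is a kernel (a probability measure) $\Qr(X_F)$ with $\Prb(X_F,X_{I_B}\miid X_{I_B})=\Qr(X_F)\otimes\delta_{X_{I_B}}$. Evaluating at $\star$ gives $\Qr(X_F)=\Prb_\Mc(X_F)$. Evaluating at $x_B$ gives $\Prb_\Mc(X_F\miid\Do(X_B=x_B))=\Prb_\Mc(X_F)$. This holds for all $x_B$, because the null sets here live only on the input coordinate, and a null set for a Dirac measure is empty.

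Second, $A\sep{\mathsf{id}}{\Af_{\Do(I_B)}}I_B\mid B\cup F$ gives a kernel $\Qr(X_A\miid X_B,X_F)$ such that
\[
\Prb(X_A,X_B,X_F,X_{I_B}\miid X_{I_B})=\Qr(X_A\miid X_B,X_F)\otimes\Prb(X_B,X_F,X_{I_B}\miid X_{I_B}).
\]
At $\star$ this says $\Qr$ is a version of $\Prb_\Mc(X_A\mid X_B,X_F)$. It agrees with any other version outside a $\Prb_\Mc(X_B,X_F)$-null set $N\subseteq\Xc_B\times\Xc_F$. At $x_{I_B}=x_B$, using the first step, we obtain
\[
\Prb_\Mc(X_A,X_F\miid\Do(X_B=x_B))=\Qr(X_A\miid X_B=x_B,X_F)\otimes\Prb_\Mc(X_F)
\]
for every $x_B$.

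It remains to replace $\Qr$ by an arbitrary version $\Prb_\Mc(X_A\mid X_F,X_B)$. The two expressions differ only when the section $N_{x_B}$ has positive $\Prb_\Mc(X_F)$-mass. This is the main obstacle, since $N$ is only known to be null for the joint law, not for the product law; \cref{ex:fail_back_door} shows exactly how this can fail. The positivity hypothesis resolves it. Since $\Prb_\Mc(X_B,X_F)(N)=0$ and $\Prb_\Mc(X_F)\otimes\Prb_\Mc(X_B)\ll\Prb_\Mc(X_F,X_B)$, we get $(\Prb_\Mc(X_F)\otimes\Prb_\Mc(X_B))(N)=0$. By Fubini, $\Prb_\Mc(X_F)(N_{x_B})=0$ for $\Prb_\Mc(X_B)$-a.e.\ $x_B$; the sections are measurable because $N$ is measurable in the product.

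For such $x_B$, the two kernels $\Qr(X_A\miid X_B=x_B,\cdot)$ and $\Prb_\Mc(X_A\mid X_F=\cdot,X_B=x_B)$ agree $\Prb_\Mc(X_F)$-a.e. Their products with $\Prb_\Mc(X_F)$ therefore coincide, which gives the first adjustment formula $\Prb_\Mc(X_B)$-a.s. The second formula follows by marginalizing out $X_F$ on both sides, since composition is product followed by marginalization.
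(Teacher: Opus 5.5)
The paper gives no proof of its own here and only cites Forr\'e--Mooij, Corollary~5.2.6, but your proof is correct and follows the standard argument. Your first two steps are exactly the version-level content of rules~3 and~2 of \cref{defthm:causal_calculus}: $\Prb_\Mc(X_F\miid\Do(X_B=x_B))=\Prb_\Mc(X_F)$ for every $x_B$, and a single kernel $\Qr$ is simultaneously a version of $\Prb_\Mc(X_A\mid X_F,X_B)$ and of $\Prb_\Mc(X_A\mid X_F\miid\Do(X_B))$, both read off directly from the global Markov property in $\Mc_{\Do(I_B)}$ without any positivity. The hypothesis $\Prb_\Mc(X_F)\otimes\Prb_\Mc(X_B)\ll\Prb_\Mc(X_F,X_B)$ then enters only in your last step, turning the joint-null disagreement set between $\Qr$ and an arbitrary version into one whose $x_B$-sections are $\Prb_\Mc(X_F)$-null for $\Prb_\Mc(X_B)$-a.e.\ $x_B$; this is why the reference-measure positivity of \cref{defthm:causal_calculus} is not needed.
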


\begin{remark}
    In \cref{ex:fail_back_door}, the positivity condition $\Prb_\Mc(X_c)\otimes \Prb_\Mc(X_b)\ll \Prb_\Mc(X_c,X_b)$ is violated.
\end{remark}

The following example shows that the positivity conditions in \cref{defthm:causal_calculus} are only sufficient, but not necessary in general.

\begin{example}[Positivity condition in \cref{defthm:causal_calculus} is not necessary]\label{ex:pos_nonnecessary}
Consider an L-CBN 
    \[
    \Mc=\big(\Df,\big\{\Prb_v(X_v\miid X_{\pa_{\Df}(v)})\big\}\big)
    \]
    where $\Df$ is shown in \cref{fig:pos_nonnecessary} and 
    \[
        \begin{aligned}
            \Prb_{u}(X_{u})&=\Uni\{[0,1]\}\\
            \Prb_b(X_b\miid X_{u}=x_{u})&=\Nc(x_{u},1)\\
            \Prb_c(X_c\miid X_{u}=x_{u},X_b=x_b)&=\delta_{x_ux_b}\\
            \Prb_a(X_a\miid X_{c}=x_{c})&=\Nc(x_{c},1).
        \end{aligned}
    \]
    \begin{figure}
    \centering
    \begin{tikzpicture}[scale=0.9, transform shape]
    \node[ndout] (c) at (0.75,1) {$c$};
    \node[ndlat] (u) at (-0.5,1) {$u$};
    \node[ndout] (b) at (0,0) {$b$};
    \node[ndout] (a) at (1.5,0) {$a$};
    \draw[arout] (c) to (a);
     \draw[arout] (u) to (b);
     \draw[arout] (u) to (c);
    \draw[arout] (b) to (c);
    \node at (0.75,-1) {$G(\Mc)$};
\end{tikzpicture}
\caption{Causal graph $\Df$ of the CBN $\Mc$ in \cref{ex:pos_nonnecessary}.}
\label{fig:pos_nonnecessary}
\end{figure} 
    A conditional density $f_{\Mc}(x_a\mid x_b,x_c)$ is 
    \[
   f_{\Mc}(x_a \mid x_b, x_c) = \frac{1}{\sqrt{2\pi}} \exp\left( -\frac{(x_a - x_c)^2}{2} \right).
   \]
   From $\Mc$, we can compute that a conditional interventional density $f_{\Mc}(x_a\mid x_c\miid \Do(x_b))$ is
   \[
        f_{\Mc}(x_a\mid x_c\miid \Do(x_b))=\frac{1}{\sqrt{2\pi}} \exp\left( -\frac{(x_a - x_c)^2}{2} \right),
   \]
   which is equal to $f_{\Mc}(x_a \mid x_b, x_c)$.
   Hence, we have the identification result that 
   \[
   \Prb_{\Mc}(X_a\mid X_c=x_c\miid \Do(X_b=x_b))=\Prb_{\Mc}(X_a\mid X_c=x_c, X_b=x_b)
   \]
   for all $(x_b,x_c)\in \Rb^2\sm N$, where $N$ is a $\lambda^2$-null set in $\Rb^2$. However, the positivity condition in the second rule of causal calculus (\cref{defthm:causal_calculus}) is violated. Indeed, we have that 
   \[
    \Prb_{\Mc}(X_c\miid \Do(X_b=x_b))=\begin{cases}
        \Uni\{[0,x_b]\},&\text{ if } x_b> 0,\\
        \Uni\{[x_b,0]\},&\text{ if } x_b< 0,\\
        \delta_0,&\text{ if } x_b= 0,
    \end{cases}
   \] 
   which does not possess a strictly positive density \wrt $\lambda$ for all $x_b$.
\end{example}

\subsection{Concepts of statistics}

Let $X:\Wc\times \Theta\to \Xc$ be a transitional random variable and $S:\Xc\to \Sc$ a measurable function (statistics), which can be considered as a transitional random variable via
    \[
        S:\Wc\times \Theta\to \Sc, \quad (w,\theta)\mapsto S(X(w,\theta)).
    \]
    Using transitional conditional independence, we can express the fact that $S$ is a sufficient statistic of $X$ \wrt $\vartheta$ as
    \[
        X \Ind{\mathsf{F}}{\Prb(W\miid \vartheta)} \vartheta \mid S.
    \]
    Ancillary statistic and adequate statistic can be tackled similarly. See \cite{forre2021transitional} for more details.

    \subsection{Why certain other approaches are unsatisfying}

In \cite{forre2021transitional}, comparisons between transitional conditional independence and other notions of conditional independence are presented. However, some arguments and claims are too brief to fully explain why alternative approaches fall short, potentially leaving readers uncertain about the justification for the proposed framework. This subsection, therefore, aims to provide a more detailed analysis of why certain alternative approaches fail to satisfy the requirements outlined in \cref{motivation:statistics,motivation:calculus}.  

A natural first approach is to reduce the problem to the domain of purely stochastic conditional independence. We introduce two such notions and demonstrate how they either fail outright or provide weaker solutions to the issues raised in \cref{motivation:statistics,motivation:calculus}.

\subsubsection{Using classic stochastic conditional independence}

\begin{definition}\label{def:stoch_I}
    Let us define a conditional independence as follows: 
    \[
     X\overset{s}{\ind} Y\mid Z \quad \Longleftrightarrow \quad  \forall t\in \Tc,\ X_t\underset{\Prb(X,Y,Z\miid T=t)}{\ind} Y_t\mid Z_t,
    \]
where $X_t(w)=X(w,t)$, $Y_t(w)=Y(w,t)$, and $Z_t(w)=Z(w,t)$ and $\Prb(X,Y,Z\miid T=t)=(X_t,Y_t,Z_t)_*\Prb(W\miid T=t)$.
\end{definition}

\vspace{0.25cm}

This notion is \emph{not} suitable for expressing sufficient statistics and casual calculus. If $Y$ is a non-stochastic variable, then $Y_t$ is a constant and for all $t\in \Tc$ we always have 
\[
    X_t\underset{\Prb(X,Y,Z\miid T=t)}{\ind} Y_t\mid Z_t,
\]
which implies that 
\[
X\overset{s}{\ind} Y\mid Z
\]
always hold. Therefore, all the statistics are sufficient and causal calculus rules are always applicable according to this notion, which is of course not the case.

Now, we consider another approach of putting probability distributions on non-stochastic variables.

\begin{definition}\label{def:stoch_II}
    Let us define a conditional independence as follows \cite{forre2020causal}: 
    \[
     X\overset{\mathsf{FM}}{\ind} Y\mid Z \quad \Longleftrightarrow \quad  \forall \Qr(T)\in\Pc(\Tc),\ X\underset{\Prb(X,Y,Z)}{\ind} Y\mid Z,
    \]
    where $\Prb(X,Y,Z)\coloneqq (X,Y,Z)_*(\Prb(W)\otimes \Qr(T))$.
\end{definition}

Can we say that $S$ is a sufficient statistic of $X$ iff $X\overset{\mathsf{FM}}{\ind} \vartheta \mid S$? No in general, this condition is equivalent to that $S$ is a \emph{pairwise sufficient statistic of $X$} \cite{dawid1980CIforSO}, i.e., for every pair $\{\Prb_{\theta_1}(X),\Prb_{\theta_2}(X)\}\subseteq \{\Prb_\theta(X)\}_\theta$ there exists a Markov kernel $\Qr(X\miid S)$ such that 
    \[
    \begin{aligned}
        \Prb(X\mid S \miid \vartheta=\theta_1)&=\Qr(X\miid S), \ \Prb(S\miid \vartheta=\theta_1)\text{-a.s.,}\\
        \Prb(X\mid S \miid \vartheta=\theta_2)&=\Qr(X\miid S), \ \Prb(S\miid \vartheta=\theta_2)\text{-a.s.}  
    \end{aligned}
    \]
    If the model $\{\Prb_\theta(X)\}_{\theta\in \Theta}$ is dominated, then we have 
    \[
    \text{Sufficiency} \quad \Longleftrightarrow  \quad \text{Pairwise Sufficiency},
    \]
    but in general we have
     \[
\begin{array}{c c}
\text{Sufficiency} & \begin{array}{c} \centernot\Longleftarrow \\[-5pt] \Longrightarrow \end{array} \quad \text{Pairwise Sufficiency}.
\end{array}
\]

This approach does not give the strongest causal calculus in terms of null sets. For illustration, we consider the third rule of causal calculus. Assume that a reference measure $\mu_C$ is such that the positivity condition holds and $X_A\Ind{\mathsf{FM}}{} X_{I_B}\mid X_C,X_D $. Since 
\[
\Prb_\Qr(X_A,X_{I_B},X_D\mid X_C)=\Prb_{\Mc}(X_A\mid X_C\miid \Do(X_{I_B}),\Do(X_D))\otimes \Qr(X_{I_B},X_D) \quad  \mu_C\text{-a.s.},
\]
conditioning on $X_{I_B}$ and $X_D$ gives
\[  
\begin{aligned}
    \Prb_{\Mc}(X_A\mid X_C\miid \Do(X_{I_B}),\Do(X_D))&=\Prb_{\Qr}(X_A\mid X_C, X_{I_B}, X_D) \quad \quad &\mu_C\otimes \Qr(X_{I_B},X_D)\text{-a.s.}\\
    &=\Prb_{\Qr}(X_A\mid X_C, X_D) \quad \quad &\Prb_\Qr(X_C,X_{I_B},X_D)\text{-a.s.}\\
    &=\Prb_{\Mc}(X_A\mid X_C\miid \Do(X_D)) \quad \quad &\mu_C\otimes \Qr(X_D)\text{-a.s.}
\end{aligned}
\]
From this, one can at most conclude that for every fixed $(x_B,x_D)\in \Xc_B\times \Xc_D$, there exists a $\mu_C$-null set $N_{x_{B},x_D}\subseteq \Xc_C$ such that for every $x_C\notin N_{x_{B},x_D}$
\[
    \Prb_{\Mc}(X_A\mid X_C=x_C\miid \Do(X_B=x_B),\Do(X_D=x_D))=\Prb_{\Mc}(X_A\mid X_C=x_C\miid\Do(X_D=x_D)).
\]
This is weaker than that for every fixed $x_D\in \Xc_D$ there exists a single null set $N_{x_D}\subseteq \Xc_C$ such that for all $x_B\in \Xc_B$ and all $x_C\in \Xc_C\sm  N_{x_D}$ the equality between the two kernels holds. For illustration, we present one concrete example in \cref{ex:no_common_null_set}.

\begin{example}[Why a common $\mu_C$-null set in $\Xc_C$ need not exist]\label{ex:no_common_null_set}
The stronger conclusion with a single $\mu_C$-null set in $\Xc_C$ does not hold in general.

Let
\[
\Xc_C=\Xc_B=[0,1],\qquad \mu_C=\lambda|_{[0,1]},
\]
where $\lambda$ denotes Lebesgue measure. For simplicity, we assume $D=\emptyset$. Note that this already covers the stronger claim, since it corresponds to the case of a single fixed $x_D$. Let the target space be $\{0,1\}$, and define two Markov kernels
\[
\Kr_1,\Kr_2:\Xc_C\times \Xc_B \dto \{0,1\}
\]
by
\[
\Kr_1(\cdot\miid x_C,x_B)=
\begin{cases}
\delta_1, & x_C=x_B,\\
\delta_0, & x_C\neq x_B,
\end{cases}
\qquad
\Kr_2(\cdot\miid x_C,x_B)=\delta_0.
\]

Let
\[
\Dc\coloneqq \{(x_C,x_B)\in [0,1]^2\mid x_C=x_B\}
\]
be the diagonal, i.e.\ the set on which $\Kr_1$ and $\Kr_2$ differ. Then for every probability measure $\Qr\in \Pc([0,1])$,
\[
(\mu_C\otimes \Qr)(\Dc)
=
\int_{[0,1]} \mu_C(\{x_B\})\,\Qr(\dr x_B)
=
0.
\]
Hence
\[
\Kr_1(\cdot\miid x_C,x_B)=\Kr_2(\cdot\miid x_C,x_B)
\qquad
(\mu_C\otimes \Qr)\text{-a.s. on } \Xc_C\times \Xc_B
\]
for every probability measure $\Qr\in \Pc([0,1])$. Moreover, for each fixed $x_B\in [0,1]$,
\[
\Kr_1(\cdot\miid x_C,x_B)=\Kr_2(\cdot\miid x_C,x_B)
\qquad
\text{ for all }x_C\in [0,1]\sm N_{x_B},
\]
where $N_{x_B}=\{x_B\}$, which is a $\mu_C$-null set. Note that $\bigcup_{x_B\in [0,1]}N_{x_B}=[0,1]$.

In fact, there is no single $\mu_C$-null set $N\subseteq [0,1]$ such that
\[
\Kr_1(\cdot\miid x_C,x_B)=\Kr_2(\cdot\miid x_C,x_B)
\qquad
\text{for all }(x_C,x_B)\in ([0,1]\sm N)\times [0,1].
\]
To see that, assume on the contrary that such null set $N$ exists. If $x_C\in [0,1]\sm N$, choose $x_B=x_C$. Therefore,
\[
\Kr_1(\{1\}\miid x_C,x_B)=1\neq 0=\Kr_2(\{1\}\miid x_C,x_B).
\]
This causes a contradiction.  Hence, although for each fixed $x_B$ the equality holds outside a $\mu_C$-null set in $\Xc_C$, one cannot in general choose this null set uniformly in $x_B$.
\end{example}

We now present the connections among those three notions of conditional independence. It is shown in \cite{forre2021transitional,forre2020causal} that
\[
    X\Ind{\mathsf{F}}{\Kr(W\miid T)}Y\mid Z,T \quad \Longleftrightarrow  \quad  X\overset{s}{\ind} Y\mid Z,
\]
and that  if $T$ is discrete or $\{\Prb(X,Y,Z\mid T=t)\}_{t\in \Tc}$ is dominated, then
\[
        X\overset{\mathsf{F}}{\ind} Y\mid Z \quad \Longleftrightarrow  \quad X\overset{\mathsf{FM}}{\ind} Y\mid Z,
\] 
and in general
\[
    \begin{array}{c c}
    X\overset{\mathsf{F}}{\ind} Y\mid Z & \begin{array}{c} \centernot\Longleftarrow \\[-5pt] \Longrightarrow \end{array} \quad X\overset{\mathsf{FM}}{\ind} Y\mid Z.
    \end{array}
\]


\subsubsection{Other approaches}

A notion of conditional independence for stochastic and non-stochastic variables is proposed in \cite{richardson2023nested}. However, it is only defined in the discrete setting and not in the general measure-theoretic setting. Furthermore, there is no discussion of the problems posed in \cref{motivation:statistics,motivation:calculus}. The extended conditional independence proposed in \cite{constantinou17extendedci} is compared with the transitional conditional independence in \cite{forre2021transitional}. Therefore, we will not discuss those notions of conditional independence proposed in \cite{richardson2023nested,constantinou17extendedci}.  We shall discuss further the relation between the conditional independence for statistical operations proposed in \cite{dawid1980CIforSO} and the transitional conditional independence in the next subsection, which is missing in the original paper \cite{forre2021transitional}. However, note that the application of the CI for statistical operations on graphical models and causal calculus is not discussed in \cite{dawid1980CIforSO}.

\subsection{On the asymmetry of transitional conditional independence}\label{sec:asymmetry_tci}

From \cref{def:tran_ci}, it is easy to see that transitional conditional independence is asymmetric, i.e., 
\[
    X\Ind{\mathsf{F}}{\Kr(W\miid T)}Y \mid Z \quad \centernot\Longrightarrow \quad  Y\Ind{\mathsf{F}}{\Kr(W\miid T)}X \mid Z.
\]

We can symmetrize it if we want a symmetrized notion of conditional independence \cite[Section~L.6]{forre2021transitional}. 

\begin{definition}[Symmetric version of transitional conditional independence]\label{def:symm_tci}
    Indeed, we can define $X\Ind{\vee}{\Kr(W\miid T)} Y\mid Z$ by
\[
    X\Ind{\mathsf{F}}{\Kr(W\miid T)} Y\mid Z \quad \vee \quad   Y\Ind{\mathsf{F}}{\Kr(W\miid T)} X\mid Z.
\]
\end{definition}
It is easy to see that the symmetric version is indeed symmetric and the transitional conditional independence is, in general, strictly stronger than its symmetrized version:
\[
\begin{array}{c c}
X\Ind{\mathsf{F}}{\Kr(W\miid T)} Y\mid Z & \begin{array}{c} \centernot\Longleftarrow \\[-5pt] \Longrightarrow \end{array} \quad X\Ind{\vee}{\Kr(W\miid T)} Y\mid Z.
\end{array}
\]

If desired, then it is possible to work with the symmetric version (\cref{def:symm_tci}). First, note that in the setting of \cref{motivation:calculus,motivation:statistics}, we have
\[
    X_{I_B} \notind{\mathsf{F}}{\Prb_{\Mc}(X_\Vc\miid \Do(X_D))} X_A\mid X_B,X_C,X_D, \quad X_{I_B} \notind{\mathsf{F}}{\Prb_{\Mc}(X_\Vc\miid \Do(X_D))} X_A\mid X_C,X_D
\]
and 
\[
    \vartheta \notind{\mathsf{F}}{\Prb(W\miid \vartheta)} X\mid S. 
\]
So we have
\[
    \begin{aligned}
         X_A \Ind{\mathsf{F}}{\Kr(W\miid T)} X_{I_B} \mid X_B,X_C,X_D \quad &\Longleftrightarrow \quad X_A \Ind{\vee}{\Kr(W\miid T)} X_{I_B} \mid X_B,X_C,X_D \\
         X_A \Ind{\mathsf{F}}{\Kr(W\miid T)} X_{I_B} \mid X_B,X_C,X_D \quad &\Longleftrightarrow \quad X_A \Ind{\vee}{\Kr(W\miid T)} X_{I_B} \mid X_B,X_C,X_D \\
         X\Ind{\mathsf{F}}{\Prb(X_W\miid \vartheta)} \vartheta \mid S
         \quad &\Longleftrightarrow \quad 
          X\Ind{\vee}{\Prb(X_W\miid \vartheta)} \vartheta \mid S.
    \end{aligned}
\]

However, conditional independence is, at its core, a notion of \emph{irrelevance}. The statement that $X$ is conditionally independent of $Y$ given $Z$ is interpreted as saying that, once $Z$ is known, $Y$ is irrelevant for $X$. In this sense, the concept is inherently asymmetric; the symmetry of ordinary stochastic conditional independence is a special feature of that particular setting. See, for example, the discussions in \cite{dawid1980CIforSO,dawid79ci_in_statistics,dawid2001separoids}.

To gain further intuition for the asymmetry of transitional conditional independence, it is helpful to consider the string-diagrammatic representation given in \cite[Definition~16 and Remark~17]{fritz23dsep}. Recall that the conditional independence
\[
X \Ind{\mathsf{F}}{\Kr(W\mid T)} Y \mid Z
\]
means that there exists a kernel $\Qr(X\miid Z)$ such that
\[
\mathrm{K}(X,Y,Z\miid T)
=
\mathrm{Q}(X\miid Z)\otimes \mathrm{K}(Y,Z\miid T)
=
\mathrm{Q}(X\miid Z)\otimes \mathrm{K}(Y\mid Z\miid T)\otimes \Kr(Z\miid T).
\]
Write
\[
\Kr_0 \coloneqq \mathrm{K}(Y,Z\miid T), 
\qquad
\Kr_1 \coloneqq \mathrm{K}(Y\mid Z\miid T),
\qquad
\Kr_2 \coloneqq \Kr(Z\miid T).
\]
The corresponding string diagram is shown in \cref{fig:string_diagram_2}.

\begin{figure}
    \centering
    \includegraphics[width=0.6\linewidth]{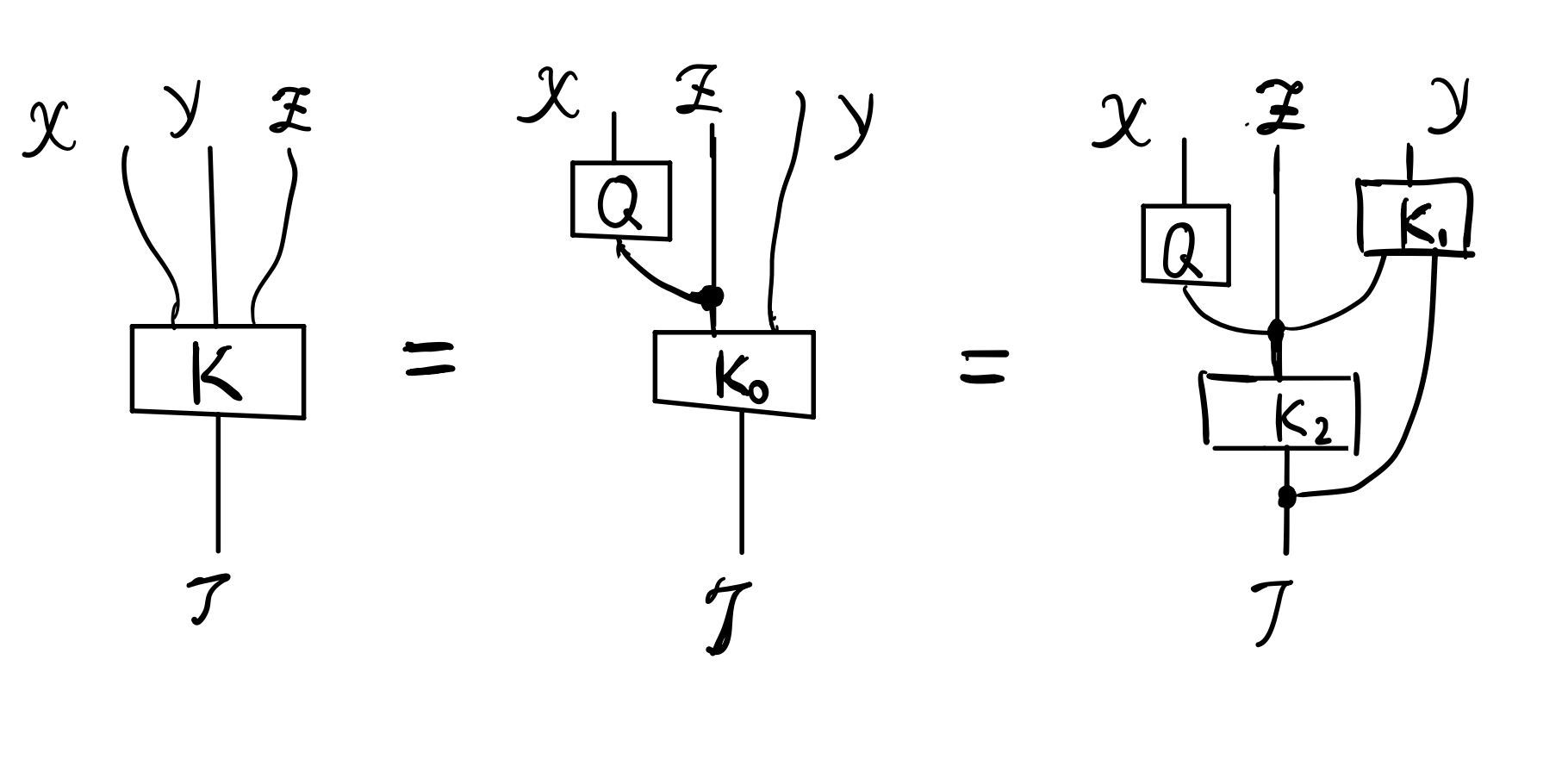}
    \caption{String-diagrammatic representation of transitional conditional independence.}
    \label{fig:string_diagram_2}
\end{figure}

From this representation, the asymmetry becomes more transparent: the variable $X$ can be generated from the information in $Z$ alone via the kernel $\Qr(X\miid Z)$, whereas $Z$ need not suffice to generate $Y$. Indeed, $Y$ may still depend on information contained in $T$ that is not captured by $Z$. Also note that the symmetric version in which $\Qr$ can also depend on $T$ in \cref{fig:string_diagram_2} is not strong enough to prove Theorem~28 in \cite{fritz23dsep} as argued in \cite[Remark~17]{fritz23dsep}.

To reinforce the intuition and clarify the claim that the symmetrized version might have lost some information about the interplay between $X, Y, Z$ and $T$, we consider an example in the setting of causal models. Consider an iCBN 
\[
\Mc=(\Df=(\Ic,\Vc,\Ec),\{\Prb_v\}_v)
\quad \text{ with } \quad  \Ic\coloneqq\{I_v:v\in \Vc\}.
\]
The graph $\Df$ is given in \cref{fig:asymmetry}. We assume that, for $\Df$ and $\Prb_{\Mc}(X_\Vc\miid X_{\Ic})$, a transitional conditional independence holds (\cref{def:tran_ci}) iff a corresponding $id$-separation holds (\cref{def:graph_sep}).

\begin{figure}[ht]
\centering
\begin{tikzpicture}[scale=0.8, transform shape]
\begin{scope}[xshift=0]
    \node[ndout] (A) at (0,0) {$a$};
    \node[ndout] (C) at (1.5,0) {$c$};
    \node[ndout] (B) at (3,0) {$b$};
    \node[ndint] (IA) at (0,1.5) {$I_a$};
    \node[ndint] (IC) at (1.5,1.5) {$I_{c}$};
    \node[ndint] (IB) at (3,1.5) {$I_b$};
    \draw[arout] (B) to (C);
    \draw[arout] (C) to (A);
    \draw[arout] (IA) to (A);
    \draw[arout] (IB) to (B);
    \draw[arout] (IC) to (C);
    \node at (1.5,-1) {$\Df$};
\end{scope}

\begin{scope}[xshift=6cm]
    \node[ndout] (A) at (0,0) {$a$};
    \node[ndout] (C) at (1.5,0) {$c$};
    \node[ndout] (B) at (3,0) {$b$};
    \node[ndint] (IA) at (0,1.5) {$I_a$};
    \node[ndint] (IC) at (1.5,1.5) {$I_{c}$};
    \node[ndint] (IB) at (3,1.5) {$I_b$};
    \draw[arout] (C) to (B);
    \draw[arout] (C) to (A);
    \draw[arout] (IA) to (C);
    \draw[arout] (IA) to (B);
    \draw[arout] (IC) to (C);
    \draw[arout] (IB) to (C);
    \node at (1.5,-1) {$\tilde{\Df}$};
\end{scope}
\end{tikzpicture}
\caption{Causal graph $\Df$ in which $a\sep{\mathsf{id}}{\Df}b\mid c\cup I_a$ but $b\nsep{id}{\Df}a\mid c\cup I_a$, and causal graph $\tilde{\Df}$ in which $b\sep{\mathsf{id}}{\Df}a\mid c\cup I_a$.}
\label{fig:asymmetry}
\end{figure}

The conditional independence
\[
    X_a \Ind{\mathsf{F}}{\Prb_{\Mc}(X_\Vc\mid X_\Ic)} X_b \mid X_c, X_{I_a}
\]
implies that there exists a Markov kernel $\Qr(X_a\miid X_c, X_{I_a})$ such that
\[
    \Prb_{\Mc}(X_a, X_b, X_c \miid X_\Ic)
    =
    \Qr(X_a\miid X_c, X_{I_a}) \otimes \Prb_{\Mc}(X_b, X_c \miid X_\Ic).
\]
By the construction of the causal model, this Markov kernel can indeed be chosen as
\[
    \Qr(X_a\miid X_c, X_{I_a})
    =\Prb_a(X_a\miid X_{\pa_\Df(a)}).
\]

In contrast, the conditional independence
\[
    X_b \Ind{\mathsf{F}}{\Prb_{\Mc}(X_\Vc\mid X_\Ic)} X_a \mid X_c, X_{I_a}
\]
does not hold. Indeed, if it did, then there would exist a Markov kernel $\tilde{\Qr}(X_b\miid X_c, X_{I_a})$ such that
\[
    \Prb_{\Mc}(X_a, X_b, X_c \miid X_\Ic)
    =
    \tilde{\Qr}(X_b\miid X_c, X_{I_a}) \otimes \Prb_{\Mc}(X_a, X_c \miid X_\Ic).
\]
However, from the construction of the causal model, it is not clear how such a kernel could arise.

To see this more concretely, assume that all relevant Markov kernels admit densities with respect to suitable reference measures. Then
\[
    \begin{aligned}
        p(x_a, x_b, x_c \miid x_\Ic)
        &=
        q(x_a\miid x_c, x_{I_a})\, p(x_b, x_c \miid x_\Ic)\\
        &=
        q(x_a\miid x_c, x_{I_a})\, p(x_b\miid x_c, x_\Ic)\, p(x_c\miid x_\Ic)\\
        &=
        p(x_b\miid x_c, x_\Ic)\, q(x_a\miid x_c, x_{I_a})\, p(x_c\miid x_\Ic)\\
        &=
        p(x_b\miid x_c, x_\Ic)\, p(x_a, x_c\miid x_\Ic).
    \end{aligned}
\]
Thus, the reverse conditional independence would require $X_b$ to be generated by a kernel depending only on $X_c$ and $X_{I_a}$. But from the construction of the causal model, we know that $X_b$ depends on $X_{I_b}$, and hence in general on information contained in $X_\Ic$ beyond $X_c$ and $X_{I_a}$. Therefore, one cannot conclude that $X_b$ depends only on $X_c$ and $X_{I_a}$.

For such a reverse conditional independence to hold, the causal model would need to have a fundamentally different structure. One such example, denoted by $\tilde{\Df}$, is shown in \cref{fig:asymmetry}.

\subsection{Relation to Dawid's conditional independence for statistical operations}

In \cite{dawid1980CIforSO}, Dawid introduced a notion of conditional independence for statistical operations. We only consider standard measurable spaces.

\begin{definition}[Statistical Operation]
    A map $\Pi:L^\infty (\Fc,\Nc_{\Fc})\to L^\infty (\Gc,\Nc_\Gc)$ satisfying (P1)-(P4) is termed a statistical operation (SO) over $(\Fc,\Nc_{\Fc})$ given $(\Gc,\Nc_{\Gc})$, where $\Fc$ and $\Gc$ are $\sigma$-algebras and $\Nc_{\Fc}$ and $\Nc_{\Gc}$ are $\sigma$-ideals.
    \begin{enumerate}
        \item [(P1)] (Linearity): $\Pi(a_1f_1+a_2f_2)=a_1\Pi f_1+a_2 \Pi f_2$, for $a_1,a_2\in \Rb$ and $f_1,f_2\in L^\infty (\Fc,\Nc_{\Fc})$;
        \item [(P2)] (Positivity): $f\geq 0 \Longrightarrow \Pi f\geq 0$;
        \item [(P3)] (Normalization): $\Pi \mathbf{1}=\mathbf{1}$, where $\mathbf{1}$ denotes the constant 1 function;
        \item [(P4)] (Continuity): If $(f_n)_{n=1}^\infty$ is a countable sequence that decreases monotonically to $0$, then $\inf_n \Pi f_n=0$. 
        \end{enumerate}
\end{definition}

\begin{definition}[Conditional independence for SO]
    Let $\Pi$ be a statistical operation over $(\Fc,\Nc_\Fc)$ given $(\Gc,\Nc_{\Gc})$. Suppose that $\Ac$ is a $\sigma$-subalgebra of $\Fc$, and $\Bc$ and $\Cc$ are $\sigma$-subalgebras of $\Gc$ satisfying $\Bc \vee \Cc=\Gc$. We say that $\Ac$ is \textbf{independent} of $\Bc$ given $\Cc$ (\wrt $\Pi$), and write 
    \[
        \Ac \overset{\mathsf{D}}{\ind} \Bc \mid \Cc \ [\Pi]
    \]
    if for all $f\in L^\infty (\Ac)$, there exists a version of $\Pi f$ that is $\Cc$-measurable.
\end{definition}

Note that the conditional independence for SO is also asymmetric. We can formulate sufficient statistics in terms of conditional independence for statistical operations.


\begin{example}[Sufficient statistics]
    
    Define $\Fc\coloneqq \sigma(X)$ and $\Gc\coloneqq \sigma(S)\vee\Sigma_\Theta$. Set $\Ac\coloneqq \sigma(X)$, $\Bc\coloneqq \Sigma_\Theta$, and $\Cc\coloneqq \sigma(S)$. Using the conditional independence for statistical operation, we can express the sufficiency of the statistics $S$ for $X$ \wrt $ \vartheta$ as
    \[
        \Ac\overset{\mathsf{D}}{\ind} \Bc\mid \Cc \ [\Pi],
    \]
    where $\Pi : L^\infty(\mathcal F,\mathcal N_{\mathcal F}) \to L^\infty(\mathcal G,\mathcal N_{\mathcal G})$ is the statistical operation induced by a version of the conditional Markov kernel
    \[
    \Prb(X\mid S\miid \vartheta): \Sc\times \Theta \dto \Xc
    \]
    of $\Prb(X,S\miid \vartheta)$ given $S$. More precisely, for $f\in L^\infty(\mathcal F,\mathcal N_{\mathcal F})$, choose a bounded measurable $g:\Xc\to\mathbb R$ such that $f=g\circ X$ modulo $\mathcal N_{\mathcal F}$, and define
    \[
    (\Pi f)(w,\theta)
    :=
    \int_{\Xc} g(x)\,
    \Prb(X\in \dr x\mid S=S(w,\theta)\miid \vartheta=\theta).
    \]
    Note that by seeing $\Bc$ and $\Cc$ as $\sigma$-subalgebras of $\Gc$ in a natural way, we have $\Bc\vee\Cc=\Gc$. 
    
    The conditional independence for statistical operation states that there is a version of $\Prb(X\mid S\miid  \vartheta)$ that does not depend on $ \vartheta$. This is equivalent to saying that there exists a Markov kernel $\Qr(X\miid S)$ such that 
    \[
        \Prb(X,S\miid  \vartheta)=\Prb(X\mid S\miid \cancel{ \vartheta})\otimes \Prb(S\miid  \vartheta)=\Qr(X\miid S)\otimes \Prb(S\miid  \vartheta),
    \]
    which is exactly $X \Ind{\mathsf{F}}{\Prb(W\miid  \vartheta)}  \vartheta \mid S$.
\end{example}

\begin{example}[General case]
Let $(\mathcal{W} \times \mathcal{T}, K(W \miid T))$ be a transitional probability space with Markov kernel:
$$
\Kr(W \miid T): \mathcal{T} \rightarrow \mathcal{W} .
$$
Consider transitional random variables:
$$
X: \mathcal{W} \times \mathcal{T} \rightarrow \mathcal{X}, \quad Y: \mathcal{W} \times \mathcal{T} \rightarrow \mathcal{Y}, \quad Z: \mathcal{W} \times \mathcal{T} \rightarrow \mathcal{Z}
$$
Define $\Fc\coloneqq  \sigma(X)\vee \sigma(Y)\vee \sigma(Z)$ and $\Gc\coloneqq \sigma(Y)\vee \sigma(Z)\vee \Sigma_\Tc$. Set $\Ac\coloneqq \sigma(X)$, $\Bc\coloneqq \sigma(Y)\vee \Sigma_\Tc$, and $\Cc\coloneqq \sigma(Z)$. Let $\Pi:L^\infty(\Fc,\Nc_\Fc)\to L^\infty(\Gc,\Nc_\Gc)$ be a statistical operation induced by Markov kernel 
\[
    \Prb(X,Y,Z\mid Y,Z\miid T):\Yc\times \Zc\times\Tc\dto \Xc\times \Yc\times \Zc.
\]
Then the conditional independence for statistical operation 
\[
    \Ac\overset{\mathsf{D}}{\ind} \Bc\mid \Cc \ [\Pi]
\]
is equivalent to transitional conditional independence
\[
    X \Ind{\mathsf{F}}{\Kr(W \miid T)} Y \mid Z.
\]

\end{example}

\section{More on causal calculus for  continuous variables}\label{sec:more_causal_calculus}

As we saw in \cref{sec:motivation_causal_calculus}, causal identification results need not hold pointwise and, in general, also fail without appropriate positivity conditions. In this section, we analyze one positivity condition for almost-sure identification and one convenient condition for pointwise identification. 

\subsection{A positivity condition}

One simple positivity condition would be:

\begin{condition}[A positivity condition]\label{cond:naive_pos}
    Let $\Mc$ be a CBN. The observational distribution $\Prb_\Mc(X_\Vc)$ of $\Mc$ admits a strictly positive density \wrt reference measure $\mu_\Vc=\bigotimes_{v\in \Vc}\mu_v$ on $\Xc_{\Vc}$ (e.g., $\Xc_\Vc=\Rb^{|\Vc|}$ and $\mu_{\Vc}=\lambda^{\otimes|\Vc|}$).
\end{condition}

However, this does not necessarily imply the positivity conditions posed in \cref{defthm:causal_calculus}. Indeed, it implies that $\mu_A\ll\Prb_{\Mc}(X_A\miid \Do(X_B=x_B))\ll \mu_A$  for $\mu_B$-a.a.\ $x_B\in \Xc_B$ where $A\subseteq \Vc$ and $B=\Vc\sm A$, but not for all $x_B\in \Xc_B$ in general. One can derive, by following the discrete-case proof and replacing probability mass functions by density functions, a weaker version of causal calculus in which the equalities in Theorem~\ref{defthm:causal_calculus} hold outside measurable exceptional $\mu_{B\cup C\cup D}$-null set $N\subseteq \Xc_{B\cup C\cup D}$ or $\mu_{C\cup D}$-null set $N\subseteq \Xc_{C\cup D}$, rather than outside exceptional sets whose sections are $\mu_{B\cup C}$-null or $\mu_C$-null for every fixed $x_D$.



Although \cref{cond:naive_pos} gives a weaker result than the positivity condition in \cref{defthm:causal_calculus} does, \cref{cond:naive_pos} is not strictly weaker than the positivity condition in \cref{defthm:causal_calculus}. For example, in the second rule of the causal calculus, given the corresponding graphical separation holds, the condition (assuming Lebesgue densities) that for all $x_B,x_C,x_D$, $f_{\Mc}(x_B,x_C\miid \Do(x_D))>0$ and $f_{\Mc}(x_C\miid \Do(x_B,x_D))>0$ allows an almost-sure identification. This condition can hold even when \cref{cond:naive_pos} fails.

Also note that \cref{cond:naive_pos} is not necessary for an almost-sure identification. For illustration, we give an explicit example.

\begin{example}[\cref{cond:naive_pos} is not necessary]\label{ex:naive_pos_nonnecessary}
    Consider the CBN $\Mc$ introduced in \cref{ex:pos_nonnecessary}.
    Note that the observational distribution of $\Mc$ admits a joint density (\wrt the Lebesgue measure) $f_{\Mc}(x_a, x_b, x_c)$ that is not strictly positive. From \cref{ex:pos_nonnecessary}, we know that $\Prb_{\Mc}(X_a\mid X_c=x_c\miid \Do(X_b=x_b))=\Prb_{\Mc}(X_a\mid X_c=x_c, X_b=x_b)$ for all $(x_b,x_c)\in \Rb^2\sm N$, where $N$ is a $\lambda^2$-null set in $\Rb^2$. 

\end{example}

Note that the ambiguity in the null set $N$ is fundamental and cannot be eliminated in general, as the conditional distribution $\Prb_{\Mc}(X_a\mid X_c, X_b)$ is unique only up to some null set without any further restriction. Therefore, although \cref{cond:naive_pos} is sufficient to guarantee almost-sure (\wrt some reference measures such as the Lebesgue measure) causal identification results, it is not strong enough to give a pointwise identification result. In the next subsection, we shall introduce a convenient condition for pointwise identification.

\subsection{A sufficient condition for pointwise identification}

For the purpose of causal identification, \cite{Richard01causal_inference_continuous} considers a special class of Markov kernels, which we now define. Another useful reference for this subsection is \cite{forre2025mathematical}.

\begin{definition}[Positive and continuous Markov kernels]\label{def:pcmk}
    We say that a Markov kernel $\Kr(X\miid Y)$ is \textbf{positive and continuous} if
    \begin{itemize}
        \item $\Xc$ and $\Yc$ are Polish spaces;

        \item (positivity) $\Kr(X\miid Y)$ is strictly positive on non-empty open subsets of $\Xc$, i.e., $\Kr(X\in O\miid Y=y)>0$ for every open subset $O\subseteq \Xc$ and $y\in \Yc$;

        \item (Feller continuity) $\Kr(X\miid Y)$ is continuous as a map from $\Yc\to \Pc(\Xc)$ where $\Pc(\Xc)$ is equipped with the weak topology.
    \end{itemize}
\end{definition}

\begin{remark}[Sufficient conditions for positive and continuous Markov kernels]
Let $\Kr(X\miid Y)$ be a Markov kernel from a Polish space $\Yc$ to a Polish space $\Xc$, and suppose it admits a $\mu$-a.s.\ positive density $k(\cdot\miid\cdot)$ \wrt a $\sigma$-finite reference measure $\mu$ that is strictly positive on non-empty open subsets of $\Xc$. If for $\mu$-a.e.\ $x\in\Xc$, the map $y\mapsto k(x\miid y)$ is continuous, and there exists an integrable function $g\in L^1(\mu)$ such that $k(x\miid y)\le g(x)$ for all $x\in\Xc$ and $y\in\Yc$, then $\Kr(X\miid Y)$ is positive and continuous. If there exists $L\in L^1(\mu)$ such that
$
|k(x\miid y_1)-k(x\miid y_2)|\le L(x)\,d_\Yc(y_1,y_2)
$
for all $y_1,y_2$ in a neighborhood of each $y$ and for $\mu$-a.e.\ $x\in\Xc$, then $\Kr(X\miid Y)$ is positive and continuous.
\end{remark}

The appeal of the class of positive and continuous Markov kernels is twofold:
(i) it is closed under marginalization, product and composition of Markov kernels;
(ii) it yields a canonical conditioning operation provided that the conditional kernel can be taken to be continuous. The following result is a simple extension of the observation in \cite{Richard01causal_inference_continuous}.

\begin{lemma}\label{lem:pcmk}
    Let $\Kr(X,Y\miid T):\,\Tc \dto  \Xc \times \Yc$, 
    $
    \Kr_1(Z\miid U,X,T):\, \Uc \times \Xc\times \Tc \dto \Zc,
    $
    and 
    $
    \Kr_2(X,Y\miid T,W):\, \Tc \times \Wc \dto \Xc \times \Yc
    $
    be positive and continuous. Then we have:
    \begin{enumerate}
            \item The marginalized Markov kernels $\Kr(X \miid T)$ and $\Kr(Y \miid T)$ are positive and continuous.
            \item The product Markov kernel 
            $
            \Kr_1(Z\miid U,X,T) \otimes \Kr_2(X,Y\miid T,W)
            $
            is positive and continuous.
            \item Suppose that the conditional Markov kernel $\Kr(X \mid Y\miid T)$ of $\Kr(X,Y\miid T)$ given $Y$ can be chosen to be continuous. Then it is pointwise unique among continuous versions, and moreover for all $y,t$ 
            \[
            \Kr(X \mid Y=y\miid T=t)=\lim_{\delta \downarrow 0}\Kr(X\mid Y\in B(y,\delta)\miid T=t),
            \]
            where $B(y,\delta)$ denotes a ball centered at $y$ with radius $\delta$ and the limit is taken in $\Pc(\Xc)$ equipped with the weak topology. Note that $\Kr(X\mid Y\in B(y,\delta)\miid T=t)$ is well defined due to positivity of $\Kr(X,Y\miid T)$.
    \end{enumerate}
\end{lemma}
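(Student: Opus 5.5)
I will prove the three items in order, using throughout two standard facts about the weak topology on $\Pc(\Xc)$ for Polish $\Xc$. The first is the portmanteau theorem: $\nu_n\to\nu$ weakly iff $\liminf_n \nu_n(O)\ge \nu(O)$ for all open $O$. The second is that a Markov kernel is Feller continuous iff $y\mapsto \int g\,\dr\Kr(\cdot\miid y)$ is continuous for every bounded continuous $g$.

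\textbf{Item (1).} The projection $\pi:\Xc\times\Yc\to\Xc$ is continuous and open. Positivity follows because for open $O\subseteq\Xc$ we have $\Kr(X\in O\miid T=t)=\Kr((X,Y)\in O\times\Yc\miid T=t)>0$, and $O\times\Yc$ is a non-empty open set. Continuity follows because the pushforward $\pi_*:\Pc(\Xc\times\Yc)\to\Pc(\Xc)$ is weakly continuous, since $g\circ\pi$ is bounded continuous whenever $g$ is.

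\textbf{Item (2).} For positivity, let $O\subseteq \Zc\times\Xc\times\Yc$ be non-empty open. It contains a basic box $O_Z\times O_{XY}$ with $O_{XY}$ open in $\Xc\times\Yc$. Then
\[
\Big(\Kr_1\otimes\Kr_2\Big)(O;(u,t,w))\ \ge\ \int_{O_{XY}}\Kr_1(Z\in O_Z\miid u,x,t)\,\Kr_2(\dr(x,y)\miid t,w).
\]
The integrand is strictly positive on $O_{XY}$, and $\Kr_2(O_{XY}\miid t,w)>0$, so the integral is positive.

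For Feller continuity, take $(u_n,t_n,w_n)\to(u,t,w)$. It suffices to show $\liminf$ of the mass of a basic open box is at least its limit value, then extend to general open sets. A cleaner route is via bounded continuous test functions $h(z,x,y)$. Define
\[
G_n(x,y)=\int h(z,x,y)\,\Kr_1(\dr z\miid u_n,x,t_n).
\]
By Feller continuity of $\Kr_1$, applied jointly in $(u,x,t)$, we get $G_n(x_n,y_n)\to G(x,y)$ whenever $(x_n,y_n)\to(x,y)$; this is continuous convergence. We also have $\Kr_2(\cdot\miid t_n,w_n)\to\Kr_2(\cdot\miid t,w)$ weakly. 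The standard lemma on weak convergence under continuously convergent uniformly bounded integrands (e.g.\ via Skorokhod representation on Polish spaces, then dominated convergence) gives $\int G_n\,\dr\Kr_2(\cdot\miid t_n,w_n)\to\int G\,\dr\Kr_2(\cdot\miid t,w)$. This joint-convergence step is the main technical point of the whole lemma. I would invoke Skorokhod's representation theorem to handle it rather than an equicontinuity argument.

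\textbf{Item (3).} Let $\Qr_1,\Qr_2$ be continuous versions. By essential uniqueness (\cref{defthm:prob_calculus}), for each $t$ the set $N_t=\{y:\Qr_1(\cdot\miid y,t)\neq\Qr_2(\cdot\miid y,t)\}$ is $\Kr(Y\miid T=t)$-null. By item (1), $\Kr(Y\miid T=t)$ charges every non-empty open set, so $N_t^c$ is dense. The set $\{y:\Qr_1(\cdot\miid y,t)=\Qr_2(\cdot\miid y,t)\}$ is closed, being the preimage of the diagonal under a continuous map into the Hausdorff space $\Pc(\Xc)$. A closed dense set is everything, so the two versions agree pointwise.

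For the limit formula, positivity gives $\Kr(Y\in B(y,\delta)\miid T=t)>0$, and
\[
\Kr(X\in\cdot\mid Y\in B(y,\delta)\miid T=t)=\frac{\int_{B(y,\delta)}\Qr(X\in\cdot\miid y',t)\,\Kr(Y\in\dr y'\miid T=t)}{\Kr(Y\in B(y,\delta)\miid T=t)}.
\]
For bounded continuous $g$, the map $y'\mapsto\int g\,\dr\Qr(\cdot\miid y',t)$ is continuous at $y$. Hence the average of $\int g\,\dr\Qr(\cdot\miid y',t)$ over $B(y,\delta)$ is within $\sup_{y'\in B(y,\delta)}\left|\int g\,\dr\Qr(\cdot\miid y',t)-\int g\,\dr\Qr(\cdot\miid y,t)\right|$ of its value at $y$, and this bound tends to $0$ as $\delta\downarrow0$. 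This yields weak convergence to $\Qr(X\miid y,t)$.
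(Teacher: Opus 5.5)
Your proof is correct. Items (1) and (3) are the paper's argument. Your remark that the agreement set is the preimage of the closed diagonal in the Hausdorff space $\Pc(\Xc)$ just spells out the paper's ``agreement on a dense set implies agreement everywhere.''

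For Feller continuity in item (2), the decomposition is also the paper's. The paper first shows that $F(u,t,x,y)=\int f(z,x,y)\,\Kr_1(\dr z\miid u,x,t)$ is jointly continuous, which is exactly your continuous convergence $G_n\to G$. It then integrates against $\Kr_2(\cdot\miid t_n,w_n)$. The difference is the tool for the step ``weakly convergent measures against moving integrands.'' The paper uses tightness of the convergent family $\{\mu_n\}\cup\{\mu\}$ on a Polish space plus uniform continuity of $f$ (resp.\ $F$) on a compact set. You use Skorokhod representation plus dominated convergence. Your route is shorter and treats both stages with one lemma; the paper's is more hands-on and needs a compactness argument in each stage.

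One step needs tightening. The convergence $G_n(x_n,y_n)\to G(x,y)$ does not follow from Feller continuity of $\Kr_1$ alone. Besides the measures $\Kr_1(\cdot\miid u_n,x_n,t_n)$, the integrand $h(\cdot,x_n,y_n)$ also varies with $n$. So this is already an instance of the moving-integrand problem, and it is where the paper spends most of its effort. Your Skorokhod lemma settles it, because joint continuity of $h$ gives $h(z_n,x_n,y_n)\to h(z,x,y)$ whenever $z_n\to z$. You should therefore invoke the lemma explicitly in both stages, not only in the $\Kr_2$ stage. The abandoned ``$\liminf$ over boxes'' remark can simply be dropped.
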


\begin{proof}
We prove the three claims in turn.

\medskip

\noindent\textbf{Step~1: Marginalization.}
We only treat $\Kr(X\miid T)$; the proof for $\Kr(Y\miid T)$ is identical.

To prove positivity, let $O\subseteq \Xc$ be a non-empty open set. Then $O\times \Yc$ is a non-empty open subset of $\Xc\times \Yc$, and hence for every $t\in \Tc$,
\[
\Kr(X\in O\miid T=t)=\Kr((X,Y)\in O\times \Yc\miid T=t)>0.
\]

To prove continuity, let $f\in C_b(\Xc)$ and define $\tilde f(x,y)\coloneqq f(x)$ on $\Xc\times \Yc$. Then $\tilde f\in C_b(\Xc\times \Yc)$, and
\[
\int_{\Xc} f(x)\,\Kr(\dr x\miid T=t)
=
\int_{\Xc\times \Yc} \tilde f(x,y)\,\Kr(\dr(x,y)\miid T=t).
\]
Since $\Kr(X,Y\miid T)$ is Feller continuous, the right-hand side is continuous in $t$. Hence $\Kr(X\miid T)$ is positive and continuous.

\medskip

\noindent\textbf{Step~2: Product.}
Write
\[
\Kr\coloneqq \Kr_1(Z\miid U,X,T)\otimes \Kr_2(X,Y\miid T,W).
\]

We show positivity. Let $B\subseteq \Zc\times \Xc\times \Yc$ be a non-empty open set, and fix $(u,t,w)\in \Uc\times \Tc\times \Wc$.  Since $B$ is open in the product topology, there exist non-empty open sets
$
O_Z\subseteq \Zc,\ O_X\subseteq \Xc, \text{ and } O_Y\subseteq \Yc
$
such that
$
 O_Z\times O_X\times O_Y\subseteq B.
$
Therefore,
\[
\Kr(B\miid u,t,w)
\ge
\int_{O_X\times O_Y}\Kr_1(O_Z\miid u,x,t)\,\Kr_2(\dr(x,y)\miid t,w).
\]
Now $\Kr_1(O_Z\miid u,x,t)>0$ for all $(u,x,t)$, because $O_Z$ is a non-empty open subset of $\Zc$, and
\[
\Kr_2(O_X\times O_Y\miid t,w)>0,
\]
because $O_X\times O_Y$ is a non-empty open subset of $\Xc\times \Yc$. Hence $\Kr(B\miid u,t,w)>0$. This proves positivity.

\smallskip

We show continuity. Let $f\in C_b(\Zc\times \Xc\times \Yc)$ be an arbitrary continuous bounded function on $\Zc\times \Xc\times \Yc$, and define
\[
F(u,t,x,y)\coloneqq \int_{\Zc} f(z,x,y)\,\Kr_1(\dr z\miid u,x,t).
\]
Then $F$ is bounded. We claim that $F$ is continuous on $\Uc\times \Tc\times \Xc\times \Yc$.

Indeed, let $(u_n,t_n,x_n,y_n)\to (u,t,x,y)$ as $n\to \infty$, and write
\[
\mu_n\coloneqq \Kr_1(\cdot\miid u_n,x_n,t_n),\quad \mu\coloneqq \Kr_1(\cdot\miid u,x,t),\quad  g_n(z)\coloneqq f(z,x_n,y_n),\quad g(z)\coloneqq f(z,x,y).
\]
Then
\[
|F(u_n,t_n,x_n,y_n)-F(u,t,x,y)|\le \left|\int (g_n-g)\,\dr\mu_n\right|+\left|\int g\,\dr\mu_n-\int g\,\dr\mu\right|.
\]
The second term tends to $0$ by the Feller continuity of $\Kr_1$, since $g\in C_b(\Zc)$.

For the first term, since $\mu_n$ converges to $\mu$ in the weak topology by the Feller continuity of $\Kr_1$, the family $\{\mu_n:n\ge 1\}\cup\{\mu\}$ is tight. Hence for every $\varepsilon>0$, there exists a compact $K\subseteq \Zc$ such that, for all large $n$,
\[
\mu_n(K^c)\le \varepsilon,\qquad \mu(K^c)\le \varepsilon.
\]
Also, since $(x_n,y_n)\to(x,y)$, there exists a compact set $C\subseteq \Xc\times \Yc$ containing $(x,y)$ and all $(x_n,y_n)$ for large $n$. As $f$ is continuous, it is uniformly continuous on the compact set $K\times C$. Therefore,
\[
\sup_{z\in K}|g_n(z)-g(z)|\to 0.
\]
Hence, for large $n$,
\[
\left|\int (g_n-g)\,\dr\mu_n\right|\le\sup_{z\in K}|g_n(z)-g(z)|+2\|f\|_\infty\,\mu_n(K^c)\le\sup_{z\in K}|g_n(z)-g(z)|+2\|f\|_\infty\varepsilon.
\]
Letting $n\to\infty$ and then $\varepsilon\downarrow 0$, we obtain
\[
F(u_n,t_n,x_n,y_n)\to F(u,t,x,y).
\]
Thus $F$ is continuous.

Now define
\[
I(u,t,w)\coloneqq \int_{\Xc\times \Yc} F(u,t,x,y)\,\Kr_2(\dr(x,y)\miid t,w).
\]
We show that $I$ is continuous. Let $(u_n,t_n,w_n)\to(u,t,w)$, and write
\[
\nu_n\coloneqq \Kr_2(\cdot\miid t_n,w_n),\quad \nu\coloneqq \Kr_2(\cdot\miid t,w),\quad h_n(x,y)\coloneqq F(u_n,t_n,x,y),\quad h(x,y)\coloneqq F(u,t,x,y).
\]
Then
\[
|I(u_n,t_n,w_n)-I(u,t,w)|\le\left|\int (h_n-h)\,\dr\nu_n\right|+\left|\int h\,\dr\nu_n-\int h\,\dr\nu\right|.
\]
Since $F$ is bounded continuous, a similar argument as above shows that the first term tends to $0$, while the second tends to $0$ by the Feller continuity of $\Kr_2$. Hence $I$ is continuous. Since
\[
I(u,t,w)=\int f(z,x,y)\,\Kr(\dr(z,x,y)\miid u,t,w),
\]
this proves that $\Kr$ is Feller continuous.

\noindent\textbf{Step~3: Conditioning.}
Let $\Qr(X\miid Y,T)$ be a continuous version of the conditional Markov kernel of $\Kr(X,Y\miid T)$ given $Y$, and write
\[
\Kr(X,Y\miid T)=\Qr(X\miid Y,T)\otimes \Kr(Y\miid T).
\]

We show the uniqueness among continuous versions. Suppose $\Qr'$ is another continuous version. By essential uniqueness of conditional kernels, for each fixed $t\in \Tc$,
\[
\Qr(\cdot\miid y,t)=\Qr'(\cdot\miid y,t)\qquad \text{for }\Kr(Y\miid T=t)\text{-a.e. }y\in \Yc.
\]
By part (1), $\Kr(Y\miid T)$ is positive; hence for each $t$, every non-empty open subset of $\Yc$ has strictly positive $\Kr(Y\miid T=t)$-measure. Therefore every full $\Kr(Y\miid T=t)$-measure subset of $\Yc$ is dense. Since both
\[
y\mapsto \Qr(\cdot\miid y,t),\qquad y\mapsto \Qr'(\cdot\miid y,t)
\]
are continuous maps from $\Yc$ into $\Pc(\Xc)$, agreement on a dense set implies agreement everywhere. Thus
\[
\Qr(\cdot\miid y,t)=\Qr'(\cdot\miid y,t)\qquad \text{for all }(y,t)\in \Yc\times \Tc.
\]

We show limit over shrinking balls. Fix $(y,t)\in \Yc\times \Tc$. By positivity of $\Kr(Y\miid T)$, for every $\delta>0$,
\[
\Kr(Y\in B(y,\delta)\miid T=t)>0,
\]
so $\Kr(X\mid Y\in B(y,\delta)\miid T=t)$ is well-defined. Let $\varphi\in C_b(\Xc)$, and define
\[
G_\varphi(y',t)\coloneqq \int_{\Xc}\varphi(x)\,\Qr(\dr x\miid y',t).
\]
Since $\Qr$ is continuous, $y'\mapsto G_\varphi(y',t)$ is continuous. Moreover,
\[
\int_{\Xc}\varphi(x)\,\Kr(\dr x\mid Y\in B(y,\delta)\miid T=t)=\frac{\int_{B(y,\delta)} G_\varphi(y',t)\,\Kr(\dr y'\miid T=t)}{\Kr(Y\in B(y,\delta)\miid T=t)}.
\]
Hence,
\[
\left|\int \varphi\,\dr\Kr(\cdot\mid Y\in B(y,\delta)\miid T=t)-G_\varphi(y,t)\right|\le\sup_{y'\in B(y,\delta)}|G_\varphi(y',t)-G_\varphi(y,t)|.
\]
Since $G_\varphi(\cdot,t)$ is continuous at $y$, the right-hand side tends to $0$ as $\delta\downarrow 0$. Therefore
\[
\int_{\Xc}\varphi(x)\,\Kr(\dr x\mid Y\in B(y,\delta)\miid T=t)\longrightarrow\int_{\Xc}\varphi(x)\,\Qr(\dr x\miid y,t).
\]
As this holds for every $\varphi\in C_b(\Xc)$, we conclude that
\[
\Kr(X\mid Y=y\miid T=t)=\lim_{\delta\downarrow 0}\Kr(X\mid Y\in B(y,\delta)\miid T=t),
\]
where the limit is taken in $\Pc(\Xc)$ equipped with the weak topology.
\end{proof}

\begin{remark}[Conditioning via densities]
Let $\Kr(X,Y\miid Z)$ be a Markov kernel from a Polish space $\Zc$ to a Polish space $\Xc\times \Yc$ that admits a strictly positive jointly continuous density $k(\cdot,\cdot\miid\cdot)$ \wrt a $\sigma$-finite reference measure $\mu_\Xc\otimes \mu_\Yc$ on $\Xc\times \Yc$, where $\mu_\Xc$ and $\mu_\Yc$ are strictly positive on nonempty open subsets of $\Xc$ and $\Yc$, respectively. Assume that there exists $g\in L^1(\mu_\Xc)$ such that
\[
k(x,y\miid z)\le g(x)
\qquad
\text{for all }(x,y,z)\in \Xc\times \Yc\times \Zc.
\]
Then
\[
k(y\miid z)\coloneqq \int_\Xc k(x,y\miid z)\,\mu_\Xc(\dr x)
\]
is finite, continuous, and strictly positive. Hence
\[
k(x\mid y\miid z)\coloneqq \frac{k(x,y\miid z)}{k(y\miid z)}
\]
is well-defined and continuous. If moreover $k(x\mid y\miid z)$ is dominated by an integrable function of $x$, then
\[
k(x\mid y\miid z)\,\mu_\Xc(\dr x)
\]
induces a positive continuous Markov kernel from $\Yc\times \Zc$ to $\Xc$.
\end{remark}

\begin{proposition}[Pointwise causal calculus]\label{prop:point_wise_ident}
Under the setting of \cref{defthm:causal_calculus}, assume 
\begin{itemize}
    \item $\Xc_v$ is a Polish space for every $v\in \Vc$ (e.g., $\Rb$),
    \item for every $v\in \Vc$, $\mu_v$ is strictly positive on non-empty open subsets of $\Xc_v$ (e.g., the Lebesgue measure on $\Rb$).
\end{itemize}
Then we have:
\begin{enumerate}
    \item Suppose 
    \[
    A\sep{\mathsf{id}}{\Af_{\Do(D)}} B\mid C\cup D,\quad  \mu_{B\cup C}\ll \Prb_\Mc(X_B,X_C\miid \Do(X_D))\ll\mu_{B\cup C}.
    \] 
    Suppose that $\Prb_\Mc(X_A\mid X_B,X_C\miid \Do(X_D))$ is continuous. If we take the continuous version of $\Prb_\Mc(X_A\mid X_C\miid \Do(X_D))$,\footnote{The continuous version always exists in this case. It follows from the corresponding rule in \cref{defthm:causal_calculus}. For instance, in (1) there exists a measurable set $N\subseteq \Xc_{B\cup C\cup D}$ such that $\mu_{B\cup C}(N_{x_D})=0$ for every $x_D\in \Xc_D$ and
\[
\Prb_\Mc(X_A\mid X_B,X_C\miid \Do(X_D))
=
\Prb_\Mc(X_A\mid X_C\miid \Do(X_D))
\]
holds on $(\Xc_B\times\Xc_C\times\Xc_D)\setminus N$. Since the reference measures are positive on non-empty open subsets, each section $N_{x_D}$ has empty interior. The equality extends from a dense subset to all points, which yields a continuous version. A similar argument applies to (2) and (3).} then we have the pointwise equality
    \[
        \Prb_\Mc(X_A\mid X_B,X_C \miid \Do(X_{D}))=\Prb_\Mc(X_A\mid X_C \miid \Do(X_{D})).
    \]
    \item Suppose 
    \[
    \begin{aligned}
        A\sep{\mathsf{id}}{\Af_{\Do(I_B,D)}} I_B\mid B\cup C\cup D, \quad
        &\mu_{B\cup C}\ll \Prb_\Mc(X_B,X_C\miid \Do(X_D))\ll\mu_{B\cup C}, \\
        &\mu_{C}\ll \Prb_\Mc(X_C\miid \Do(X_B,X_D))\ll\mu_{C},
    \end{aligned}
    \]
    Suppose that $\Prb_\Mc(X_A\mid X_C\miid \Do(X_B,X_D))$ is continuous. If we take the continuous version of $\Prb_\Mc(X_A\mid X_B,X_C\miid \Do(X_D))$, then we have the pointwise equality
    \[
        \Prb_\Mc(X_A\mid X_C \miid \Do(X_B,X_{D}))=\Prb_\Mc(X_A\mid X_B,X_C \miid \Do(X_{D})).
    \]
    \item Suppose 
    \[
    \begin{aligned}
        A\sep{\mathsf{id}}{\Af_{\Do(I_B,D)}} I_B\mid  C\cup D, \quad
        &\mu_{C}\ll \Prb_\Mc(X_C\miid \Do(X_B,X_D))\ll\mu_{C}, \\
        &\mu_{C}\ll \Prb_\Mc(X_C\miid \Do(X_D))\ll\mu_{C},
    \end{aligned}
    \]
    Suppose that $\Prb_\Mc(X_A\mid X_C\miid \Do(X_B,X_D))$ is continuous. If we take the continuous version of $\Prb_\Mc(X_A\mid X_C\miid \Do(X_D))$, then we have the pointwise equality
    \[
        \Prb_\Mc(X_A\mid X_C \miid \Do(X_B,X_{D}))=\Prb_\Mc(X_A\mid X_C \miid \Do(X_{D})).
    \]
\end{enumerate}
\end{proposition}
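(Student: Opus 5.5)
The plan is to reduce each of the three rules to the corresponding almost-sure statement in \cref{defthm:causal_calculus}, and then upgrade ``almost everywhere'' to ``everywhere'' by a density-plus-continuity argument, exactly as in the uniqueness part of Step~3 in the proof of \cref{lem:pcmk}.

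\textbf{Step 1: the a.s.\ statement.} Take rule (1) as the model case. By \cref{defthm:causal_calculus}(1), under the stated separation and positivity, there is a measurable $N\subseteq \Xc_{B\cup C\cup D}$ with $\mu_{B\cup C}(N_{x_D})=0$ for every $x_D$. Outside $N$, the chosen continuous version of $\Prb_\Mc(X_A\mid X_B,X_C\miid \Do(X_D))$ agrees with $\Prb_\Mc(X_A\mid X_C\miid \Do(X_D))$, the latter viewed as a kernel constant in $x_B$.

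Essential uniqueness is harmless here. Changing either kernel on a $\Prb_\Mc(X_B,X_C\miid\Do(X_D))$-null set changes the exceptional set only by a set whose $x_D$-sections are $\mu_{B\cup C}$-null, because $\mu_{B\cup C}\ll \Prb_\Mc(X_B,X_C\miid\Do(X_D))$ sectionwise. So the statement holds for the specific versions we pick.

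\textbf{Step 2: density of good points.} Fix $x_D$. Since each $\mu_v$ charges every nonempty open set, so does the product $\mu_{B\cup C}$ on the Polish space $\Xc_{B\cup C}$ (via basic open rectangles). Hence the null set $N_{x_D}$ has empty interior, and its complement is dense in $\Xc_B\times\Xc_C$.

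\textbf{Step 3: extension by continuity.} Both kernels, for fixed $x_D$, are continuous maps $\Xc_B\times\Xc_C\to\Pc(\Xc_A)$ with the weak topology, and $\Pc(\Xc_A)$ is Hausdorff (metrizable). By Step 2 they agree on a dense set, so they agree everywhere. The continuous version of the right-hand side exists by the footnote's argument: the left-hand kernel restricted to the dense good set does not depend on $x_B$, so by continuity it is independent of $x_B$ everywhere, and this gives a continuous version depending only on $(x_C,x_D)$.

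Here lies what I expect to be the main subtlety: joint continuity in $(x_B,x_C,x_D)$ versus continuity for fixed $x_D$. I would only use continuity in the non-$D$ coordinates with $x_D$ fixed, which is all the argument requires. For rule (1), uniqueness among continuous versions also follows this way, since the full-measure sets are dense by positivity.

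\textbf{Rules (2) and (3).} These are identical in structure. For rule (2), \cref{defthm:causal_calculus}(2) gives a set $\tilde N$ with $\mu_{B\cup C}$-null $x_D$-sections off which the two kernels agree; both hypotheses of absolute continuity are needed exactly there. The continuous kernel $\Prb_\Mc(X_A\mid X_C\miid\Do(X_B,X_D))$ and the chosen continuous version of $\Prb_\Mc(X_A\mid X_B,X_C\miid\Do(X_D))$ then agree on a dense subset of $\Xc_B\times\Xc_C$, hence everywhere.

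For rule (3), the exceptional set has the form $\Xc_B\times\tilde N$ with $\mu_C(\tilde N_{x_D})=0$. So for each fixed $(x_B,x_D)$ the two continuous maps on $\Xc_C$ agree on the dense set $\Xc_C\setminus\tilde N_{x_D}$, hence everywhere.

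The only care needed is checking that the sections are null with respect to a measure positive on open sets, which is exactly the stated hypothesis on the $\mu_v$.
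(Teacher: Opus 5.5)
Your proposal is correct and follows the paper's own argument, which is only sketched in the footnote. The paper takes the almost-sure equality from \cref{defthm:causal_calculus}, whose exceptional sets have $\mu$-null sections; it notes these sections have empty interior because the $\mu_v$ charge non-empty open sets; and it extends by continuity from the dense complement. Your version-independence check in Step~1 is a useful addition.

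One inference needs tightening, in your existence remark for rule~(1). Density of the good set alone does not force the continuous left-hand kernel to be independent of $x_B$: a countable dense set with pairwise distinct $x_C$-coordinates satisfies your premise vacuously. What you need is that $N_{x_D}$ is $\mu_B\otimes\mu_C$-null. By Tonelli, its $x_B$-slices are then $\mu_B$-null for $\mu_C$-a.e.\ $x_C$. Continuity in $x_B$ gives independence of $x_B$ for a dense set of $x_C$, and continuity in $x_C$ extends this to all $x_C$.
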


\begin{proposition}[Pointwise back-door adjustment formula]
    Under the setting of \cref{prop:back-door}, assume that $F\sep{\mathsf{id}}{\Af_{\Do(I_B)}} I_B$, $A\sep{\mathsf{id}}{\Af_{\Do(I_B)}}I_B\mid B\cup F$, and $ \Prb_\Mc(X_B)$ is strictly positive on non-empty open subsets of $\Xc_B$. Suppose that $\Prb_{\Mc}(X_A,X_F\miid \Do(X_B))$ is continuous. If there exists a continuous version of $\Prb_{\Mc}(X_A\mid X_F,X_B)$, then taking the continuous version gives the pointwise adjustment formulas:
    \[
    \begin{aligned}
        \Prb_\Mc(X_A,X_F\miid \Do(X_B))&=\Prb_\Mc(X_A\mid X_F,X_B)\otimes \Prb_\Mc(X_F),\\
        \Prb_\Mc(X_A\miid \Do(X_B))&=\Prb_{\Mc}(X_A\mid X_F,X_B)\circ \Prb_{\Mc}(X_F).
    \end{aligned}
    \]
\end{proposition}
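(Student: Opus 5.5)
The plan is to upgrade the almost-sure back-door formulas of \cref{prop:back-door} to pointwise equalities, using continuity and the fact that a full-measure set for a measure that charges every non-empty open set is dense.

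\textbf{Step 1: obtain the a.s.\ statement under the weaker positivity.} \cref{prop:back-door} assumes $\Prb_\Mc(X_F)\otimes \Prb_\Mc(X_B)\ll \Prb_\Mc(X_F,X_B)$, which is not in the present hypotheses. I would therefore rederive the first formula directly, avoiding that assumption.
\begin{itemize}
\item From $F\sep{\mathsf{id}}{\Af_{\Do(I_B)}} I_B$ and \cref{thm:markov_property} applied to $\Prb_\Mc(X_\Vc\miid X_{I_B})$, we get $\Prb_\Mc(X_F\miid \Do(X_B=x_B))=\Prb_\Mc(X_F)$ for all $x_B$. This is an equality of marginals, so no null set appears.
\item From $A\sep{\mathsf{id}}{\Af_{\Do(I_B)}}I_B\mid B\cup F$ we get a kernel $\Qr(X_A\miid X_F,X_B)$ that serves simultaneously as $\Prb_\Mc(X_A\mid X_F\miid\Do(X_B))$ and as $\Prb_\Mc(X_A\mid X_F,X_B)$. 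This is rule 2 of \cref{defthm:causal_calculus} in its first, positivity-free form.
\end{itemize}
Combining the two gives
\[
\Prb_\Mc(X_A,X_F\miid \Do(X_B=x_B))=\Qr(X_A\miid X_F,X_B=x_B)\otimes\Prb_\Mc(X_F) \quad\text{for every } x_B.
\]
Moreover, $\Qr$ agrees with any version of $\Prb_\Mc(X_A\mid X_F,X_B)$ off a $\Prb_\Mc(X_F,X_B)$-null set.

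\textbf{Step 2: transfer to the continuous version.} Let $\Qr^c$ be the continuous version of $\Prb_\Mc(X_A\mid X_F,X_B)$. Set
\[
\Kr^c(x_B)\coloneqq \Qr^c(X_A\miid X_F,X_B=x_B)\otimes\Prb_\Mc(X_F).
\]
By the argument of \cref{lem:pcmk}(2), $x_B\mapsto \Kr^c(x_B)$ is weakly continuous: integrate $f\in C_b$ against $\Qr^c$, then use dominated convergence in $x_B$ under the fixed measure $\Prb_\Mc(X_F)$. This needs only continuity of $\Qr^c$ in $x_B$ for each fixed $x_F$.

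Next, since $\Qr=\Qr^c$ $\Prb_\Mc(X_F,X_B)$-a.s., disintegrating $\Prb_\Mc(X_F,X_B)$ along $X_B$ gives the following: for $\Prb_\Mc(X_B)$-a.e.\ $x_B$, the two kernels agree $\Prb_\Mc(X_F\mid X_B=x_B)$-a.s. Here is the main obstacle. We need agreement $\Prb_\Mc(X_F)$-a.s., and without the product-domination assumption the conditional law of $X_F$ given $X_B$ may be singular to the marginal law of $X_F$. This is exactly the failure in \cref{ex:fail_back_door}.

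I would try first to exploit continuity in $x_F$ as well. If $\Qr^c$ is jointly continuous, it is determined on the closed support of $\Prb_\Mc(X_F,X_B)$. I would then argue that this support is all of $\mathrm{supp}\,\Prb_\Mc(X_F)\times\Xc_B$, using that $\Prb_\Mc(X_B)$ charges every open set. If that fails, I would fall back on reading the statement as tacitly retaining the domination hypothesis from \cref{prop:back-door} (``under the setting of''). Then $\Kr^c(x_B)=\Prb_\Mc(X_A,X_F\miid\Do(X_B=x_B))$ for all $x_B$ outside a $\Prb_\Mc(X_B)$-null set $N$.

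\textbf{Step 3: density argument.} Since $\Prb_\Mc(X_B)$ charges every non-empty open set, $\Xc_B\sm N$ is dense. Both $x_B\mapsto\Prb_\Mc(X_A,X_F\miid\Do(X_B=x_B))$ (continuous by hypothesis) and $x_B\mapsto\Kr^c(x_B)$ are continuous into $\Pc(\Xc_A\times\Xc_F)$, which is Hausdorff. They agree on a dense set, hence everywhere, exactly as in the uniqueness part of \cref{lem:pcmk}(3).

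This proves the first formula pointwise. The second follows by marginalizing out $X_F$, using \cref{lem:pcmk}(1): the composition with $\Prb_\Mc(X_F)$ is precisely the $X_A$-marginal of the product.
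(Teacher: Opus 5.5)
Your fallback argument is correct and matches the intended proof. The paper states this proposition without proof, but it is meant to follow from the $\Prb_\Mc(X_B)$-a.s.\ formula of \cref{prop:back-door} via the continuity-plus-density extension in the footnote to \cref{prop:point_wise_ident}, which is your Steps 2--3. However, the fallback is not optional. The hypothesis $\Prb_\Mc(X_F)\otimes\Prb_\Mc(X_B)\ll\Prb_\Mc(X_F,X_B)$ belongs to ``the setting of \cref{prop:back-door}'' and is exactly what your Fubini step needs. Your first route for avoiding it cannot work, because without domination the conclusion fails for some continuous versions.

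The claim you hoped for is false: strict positivity of $\Prb_\Mc(X_B)$ does not force $\mathrm{supp}\,\Prb_\Mc(X_F,X_B)\supseteq\mathrm{supp}\,\Prb_\Mc(X_F)\times\Xc_B$. Take \cref{ex:fail_back_door} with $\Xc_a=\Xc_c=[0,1]$, $A=\{b\}$, $B=\{a\}$, $F=\{c\}$.
\begin{itemize}
\item Both separations hold.
\item $\Prb_\Mc(X_a)=\tfrac12\delta_0+\tfrac12\Uni\{[0,1]\}$ charges every non-empty open set.
\item $\Prb_\Mc(X_b,X_c\miid\Do(X_a=x_a))$ is a translate in $x_a$, hence continuous.
\end{itemize}
But $\mathrm{supp}\,\Prb_\Mc(X_c,X_a)=([0,\tfrac12]\times\{0\})\cup([\tfrac12,1]\times[0,1])$ misses the open set $\{x_c<\tfrac12,\ x_a>0\}$, and there a continuous version is unconstrained. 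Set $m(x_c)\coloneqq\min\{2x_c,1\}$. Then $\Uni\{[x_a m(x_c)-\tfrac{x_c}{2},\,x_a m(x_c)+\tfrac{x_c}{2}]\}$ (read as $\delta_0$ at $x_c=0$) is a jointly continuous version of $\Prb_\Mc(X_b\mid X_c,X_a)$: it coincides with the true conditional on the support. Yet its composition with $\Prb_\Mc(X_c)$ has mean $\tfrac34x_a\neq x_a$ for every $x_a\in(0,1]$. With $m\equiv1$, the actual mechanism, the formula does hold. So the failure comes precisely from non-uniqueness of continuous versions off the joint support, and only domination rules this out.

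So commit to the domination reading. With it, your Steps 1--3 form a complete proof. Step 1 then amounts to a reproof of \cref{prop:back-door}, but it usefully isolates that domination enters only once: transferring $\Qr=\Qr^c$ from $\Prb_\Mc(X_F,X_B)$-a.s.\ to $\Prb_\Mc(X_F)$-a.s.\ for $\Prb_\Mc(X_B)$-a.e.\ $x_B$. Your dominated-convergence proof that $x_B\mapsto\Qr^c(X_A\miid X_F,X_B=x_B)\otimes\Prb_\Mc(X_F)$ is continuous also supplies a detail the paper leaves implicit.
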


\section{An ``one-line'' formulation of measure-theoretic ID-algorithm using fixing operation}

Let $\Mc$ be an L-iCBN whose observable graph is iADMG $\Af=(\Ic,\Vc,\Ec)$. Define for $D\subseteq \Vc$
\[
    \Qc[D]\coloneqq \Prb_\Mc(X_D\miid \Do(X_{\Vc\sm D}),X_{\Ic}).
\]
Assume that $\Ic=\emptyset$ and the observational distribution of $\Mc$ admits a strictly positive probability mass function. If nonempty sets $A,B\subseteq \Vc$ are disjoint, the ``one-line formulation'' of ID algorithm derived in \cite[Theorem~48]{richardson2023nested} is: if $\mathsf{Distr}(\Af_\Dc)\subseteq \mathsf{Intrin}(\Af)$
\begin{equation}\label{eqn:nested_mark}
    p_{\Mc}(x_A\miid \Do(x_B))=\sum_{x_{\Dc\sm A}}\prod_{D\in \mathsf{Distr}(\Af_\Dc)} \Qc[D] 
    = \sum_{x_{\Dc\sm A}}\prod_{D\in \mathsf{Distr}(\Af_\Dc)} \phi_{\Vc\sm D}(p_{\Mc}(x_{\Vc});\Af), 
\end{equation}
where $\Dc=\anc_{\Af_{\Vc\sm B}}(A)$ and $\mathsf{Distr}(\Af_{\Dc})$ denotes the set of districts (i.e., c-components) of $\Af_{\Dc}$ and $\mathsf{Intrin}(\Af)$ denotes the set of intrinsic sets of $\Af$ \cite[Definition~33]{richardson2023nested}.  Every factor $\Qc[D]$ for $D\in \mathsf{Distr}(\Af_\Dc)\cap \mathsf{Intrin}(\Af)$ can be derived from $\Qc[\Vc]$ by applying the fixing operation \cite[Definition~19]{richardson2023nested} iteratively in an arbitrary order \cite[Theorem~31]{richardson2023nested}, which is defined as\footnote{Note that, conceptually, the fixing operation is different from hard intervention on graphs. We interpret $\phi_r(\Gf)\coloneqq \Gf_{\Do(r)}$ as a purely mathematical definition.} 
    \[
       \phi_r(\Gf)\coloneqq \Gf_{\Do(r)}, \quad \phi_r\big(q(x_{V}\miid x_W);\Gf\big)\coloneqq \frac{q(x_V\miid x_W)}{q(x_r\mid x_{\mathsf{Mb}_\Gf(r)\cap V} \miid x_{W})}
    \]
    for iADMG $\Gf=(W,V,\tilde{\Ec})$ and fixable node $r\in V$ in the sense that \cite[Definition~17]{richardson2023nested} \[\mathsf{Distr}_{\Gf}(r)\cap \de_{\Gf}(r)=\{r\}.\] 

    We extend the definition of $\phi_r$ to the general measure-theoretic setting. 
    \begin{definition}[Measure-theoretic fixing operation]
        Let $\Mc$ be an L-iCBN with observable iADMG $\Gf=(W,V,\tilde{\Ec})$ and define for a fixable node $r\in V$: 
        \[
            \phi_r\big(\Prb_{\Mc}(X_{V}\miid X_{W});\Gf\big)\coloneqq \Prb_{\Mc}(X_{\de_{\Gf}(r)\sm \{r\}}\mid X_{\mathsf{NonDe}_{\Gf_{V}}(r)\cup \{r\}}\miid X_{W}) \otimes \Prb_{\Mc}(X_{\mathsf{NonDe}_{\Gf_{V}}(r)}\miid X_{W}),
        \]
        where $\mathsf{NonDe}_{\Gf_{V}}(r)=V\sm \de_{\Gf}(r)$. 
    \end{definition}
    
    Suppose kernel $\Prb_{\Mc}(X_{V}\miid X_{W})$ admits a strictly positive mass function, then we can see that it recovers the original definition. More precisely,
        \[
            \begin{aligned}
                &p_{\Mc}(x_{\de_{\Gf}(r)\sm \{r\}}\mid x_{\mathsf{NonDe}_{\Gf_{V}}(r)\cup \{r\}}\miid x_{W}) \cdot p_{\Mc}(x_{\mathsf{NonDe}_{\Gf_{V}}(r)}\miid x_{W})\\
                &=\frac{p_{\Mc}(x_{\de_{\Gf}(r)\sm \{r\}}\mid x_{\mathsf{NonDe}_{\Gf_{V}}(r)\cup \{r\}}\miid x_{W})\cdot p_\Mc(x_{r}\mid x_{\mathsf{NonDe}_{\Gf_{V}}(r)}\miid x_W)  \cdot p_{\Mc}(x_{\mathsf{NonDe}_{\Gf_{V}}(r)}\miid x_{W})}{p_\Mc(x_{r}\mid x_{\mathsf{NonDe}_{\Gf_{V}}(r)}\miid x_W)}\\
                &=\frac{p_\Mc(x_V\miid x_{W})}{p_\Mc(x_{r}\mid x_{\mathsf{NonDe}_{\Gf_{V}}(r)}\miid x_W)}=\frac{p_\Mc(x_V\miid x_{W})}{p_\Mc(x_{r}\mid x_{\mathsf{Mb}_{\Gf_{V}}(r)}\miid x_W)}=\phi_r\big(p_{\Mc}(x_{V}\miid x_{W});\Gf\big),
            \end{aligned}
        \]
        where the fourth equality uses the fixability of $r$ (i.e., $\mathsf{Distr}_{\Gf}(r)\cap \de_{\Gf}(r)=\{r\}$) or \cite[Proposition~21]{richardson2023nested}.

\cref{lem:pcmk} enables us to derive pointwise identification results for a class of L-iCBNs. Let $\Mb_c^+(\Af)$, where $\Af$ is an iADMG, denote the collection of L-iCBNs
\[
    \Mc=\big(\Df=(\Ic,\Vc,\Lc,\Ec),\{\Prb_v(X_v\miid X_{\pa_{\Df}(v)})\}_{v\in \Vc\dcup \Lc}\big)
\]
such that $\Df_{\sm \Lc}=\Af$ and, for every $v\in \Vc\dcup \Lc$, the kernel $\Prb_v(X_v\miid X_{\pa_{\Df}(v)})$ is positive and continuous in the sense of \cref{def:pcmk} and there exist $\sigma$-finite reference measures $\mu_v$ on $\Xc_v$ for all $v\in \Vc$ such that for all $D\subseteq \Vc$ it holds
\[
    \mu_D\ll \Qc[D]\ll \mu_D.
\]
Note that if $\Mc\in \Mb_c^+(\Af)$ and $A,B\subseteq \Vc$ are disjoint, then the interventional kernel \[\Prb_\Mc(X_A\miid \Do(X_B),X_{\Ic})\] is necessarily positive and continuous by \cref{lem:pcmk,def:int_icbn}.

\begin{proposition}
     Let $\Mc\in \Mb_c^+(\Gf)$ be an L-iCBN with observable iADMG $\Gf=(W,V,\tilde{\Ec})$. Let node $r\in V$ be fixable. Then $\phi_r\big(\Prb_{\Mc}(X_{V}\miid X_{W});\Gf\big)$ admits a continuous version.  If we take that continuous version of $\phi_r\big(\Prb_{\Mc}(X_{V}\miid X_{W});\Gf\big)$, then we have the pointwise equality
        \[
            \phi_r\big(\Prb_{\Mc}(X_{V}\miid X_{W});\Gf\big)= \Prb_{\Mc}(X_{V\sm \{r\}}\miid \Do(X_r),X_W).
        \]
\end{proposition}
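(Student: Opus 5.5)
The plan is to show the identity first almost surely, via the causal calculus, and then upgrade it to a pointwise identity using continuity and positivity. Write $N\coloneqq \mathsf{NonDe}_{\Gf_V}(r)$ and $E\coloneqq \de_{\Gf}(r)\sm\{r\}$, so $V=N\dcup\{r\}\dcup E$. The interventional kernel $\Prb_{\Mc}(X_{V\sm\{r\}}\miid \Do(X_r),X_W)$ is positive and continuous by \cref{lem:pcmk,def:int_icbn}, since $\Mc\in\Mb_c^+(\Gf)$.

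\textbf{Step 1 (graphical separations).} Decompose the target kernel as
\[
\Prb_{\Mc}(X_E\mid X_N\miid \Do(X_r),X_W)\otimes \Prb_{\Mc}(X_N\miid \Do(X_r),X_W).
\]
From fixability, $\mathsf{Distr}_\Gf(r)\cap\de_\Gf(r)=\{r\}$, I will verify two separations.
\begin{itemize}
\item[(a)] $N\sep{\mathsf{id}}{\Gf_{\Do(I_r)}} I_r\mid\emptyset$ (together with $W$). Since $r$ is not an ancestor of $N$, any path from $I_r$ into $N$ must go $I_r\to r$ and then eventually reverse direction, which forces an unconditioned collider.
\item[(b)] $E\sep{\mathsf{id}}{\Gf_{\Do(I_r)}} I_r\mid N\cup\{r\}\cup W$. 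A path $I_r\to r\cdots$ has $r$ conditioned as a non-collider if it leaves $r$ through a tail, which blocks it. If it leaves through an arrowhead, $r\leftrightarrow\cdots$, the path travels through the district of $r$. Fixability says that district meets $\de(r)$ only in $r$, so the colliders along that stretch lie in $N$ (conditioned, open), and the first non-collider lies in $N$, which blocks the path.
\end{itemize}

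\textbf{Step 2 (almost-sure identification).} Rule 3 of \cref{defthm:causal_calculus}, applied to (a) in its input-node version with $D=\emptyset$ and $W$ kept as inputs, gives $\Prb(X_N\miid\Do(X_r),X_W)=\Prb(X_N\miid X_W)$. Here I use that there is no conditioning set, so the identity holds for all $x_r,x_W$ by the Markov property, \cref{thm:markov_property}, directly.

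Rule 2, applied to (b), gives
\[
\Prb(X_E\mid X_N\miid\Do(X_r),X_W)=\Prb(X_E\mid X_N,X_r\miid X_W)
\]
outside a set whose $x_W$-sections are $\mu_{N\cup\{r\}}$-null. The positivity hypotheses of rule 2 hold because $\mu_D\ll\Qc[D]\ll\mu_D$ for every $D\subseteq\Vc$, and marginals of such kernels inherit two-sided absolute continuity. Combining the two identities shows that $\phi_r$ agrees with the interventional kernel after integration, so the two kernels coincide as kernels for every $(x_r,x_W)$.

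\textbf{Step 3 (continuous version and pointwise equality).} The interventional conditional $\Prb(X_E\mid X_N\miid\Do(X_r),X_W)$ admits a continuous version. This follows because in the L-iCBN $\Mc_{\Do(r)}$ the kernel can be expressed through products and marginals of positive continuous kernels; I expect this to be the main obstacle. If the existence of that continuous version is not available directly, I will instead argue as in the footnote to \cref{prop:point_wise_ident}, which is the fallback.

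Given a continuous version, the observational conditional agrees with it off a set whose sections are $\mu$-null. Since the $\mu_v$ charge open sets, the complement of such a set is dense in each section, so defining the observational conditional as the continuous extension yields a continuous version of $\phi_r$. By part (3) of \cref{lem:pcmk} this version is unique and given by the shrinking-ball limit.

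Finally, both sides of the claimed equality are continuous kernels in $(x_r,x_W)$, and they agree for every $(x_r,x_W)$ by Step 2. Hence they coincide pointwise, as probability measures on $\Xc_{V\sm\{r\}}$, which is the claimed equality.
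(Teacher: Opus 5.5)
Your Step 1 and the first half of Step 2 match the paper's proof: the same two separations, rule 2 for the descendant factor, and $\Prb_\Mc(X_N\miid\Do(X_r),X_W)=\Prb_\Mc(X_N\miid X_W)$ everywhere. You obtain the latter directly from the Markov property, which is valid since nothing stochastic is conditioned on. The gap is in the passage from the almost-sure statement to continuity and pointwise equality, and it has two parts.

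First, the end of Step 2 overclaims. Rule 2 only gives agreement of $\Prb_\Mc(X_E\mid X_N,X_r\miid X_W)$ and $\Prb_\Mc(X_E\mid X_N\miid\Do(X_r),X_W)$ off a set whose $x_W$-sections are $\mu_{N\cup\{r\}}$-null. By Tonelli and $\Prb_\Mc(X_N\miid X_W)\ll\mu_N$, this gives equality of the two product kernels only for $\mu_r$-a.e.\ $x_r$ (for each $x_W$), not for every $(x_r,x_W)$. Altering the observational conditional on $\Xc_N\times\{x_r^0\}\times\Xc_W$, a null set when $\mu_r$ is atomless, changes $\phi_r$ at $x_r^0$. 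So your closing sentence ``they agree for every $(x_r,x_W)$ by Step 2'' is unavailable.

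Second, and more seriously, Step 3 needs a continuous version of the interventional conditional $\Prb_\Mc(X_E\mid X_N\miid\Do(X_r),X_W)$, and the hypotheses do not supply one. $\Mb_c^+(\Gf)$ is closed under products and marginals, but conditioning on $X_N$ is a disintegration. \cref{lem:pcmk}(3) only gives uniqueness \emph{if} a continuous conditional exists.

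This can genuinely fail inside $\Mb_c^+$. Take a latent parent $L$ of both $N$ and $E$. Let the density of $X_N$ given $L=l$ be $\varphi(x_N)(1+\tanh l)$ for $x_N\ge 0$ and $\varphi(x_N)(1-\tanh l)$ for $x_N<0$. This kernel is positive and Feller continuous in $l$. However, the posterior of $L$, and hence the conditional law of $X_E$ given $X_N$, jumps at $x_N=0$ whenever $X_E$ depends on $L$. Your fallback does not repair this: the footnote to \cref{prop:point_wise_ident} presupposes that one side is already continuous. In rule 2 that side is exactly $\Prb_\Mc(X_A\mid X_C\miid\Do(X_B,X_D))$, i.e.\ the kernel you lack.

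The paper avoids the inner conditional altogether. Regard $\phi_r$ as a kernel $\Xc_r\times\Xc_W\dto\Xc_{V\sm\{r\}}$. It agrees with $\Prb_\Mc(X_{V\sm\{r\}}\miid\Do(X_r),X_W)$ off a measurable set whose $x_W$-sections are $\mu_r$-null, and that kernel is positive and continuous by \cref{lem:pcmk}. Redefining $\phi_r$ on the exceptional set therefore yields a continuous version, for which the pointwise equality holds by construction. This version is unique, because $\mu_r$ charges non-empty open sets, so every exceptional section has dense complement. Two kernels that are continuous in $x_r$ and agree on a dense set agree everywhere.
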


\begin{proof}
     Since $r$ is fixable, it holds true
    \[
       \de_{\Gf}(r)\sm \{r\} \sep{\mathsf{id}}{\Gf_{\Do(I_r)}}I_r\mid \mathsf{NonDe}_{\Gf}(r)\cup \{r\}\cup W \quad \text{ and }\quad \mathsf{NonDe}_{\Gf}(r) \sep{\mathsf{id}}{\Gf_{\Do(I_r)}}I_r\mid  W.
    \]
    Therefore, by \cref{defthm:causal_calculus}, we have 
    \[
        \Prb_{\Mc}\big(X_{\de_{\Gf}(r)\sm \{r\}}\mid X_{\mathsf{NonDe}_{\Gf}(r)\cup \{r\}}\miid X_W\big)=\Prb_{\Mc}\big(X_{\de_{\Gf}(r)\sm \{r\}}\mid X_{\mathsf{NonDe}_{\Gf}(r)}\miid \Do(X_r),X_W\big)
    \]
    up to a measurable set $N\subseteq \Xc_{\mathsf{NonDe}_{\Gf}(r)}\times \Xc_r\times \Xc_W$ such that $\mu_{\mathsf{NonDe}_{\Gf}(r)\cup \{r\}}(N_{x_W})=0$ for every $x_W\in \Xc_W$.
    By \cref{prop:point_wise_ident} we have pointwise equality 
    \[
        \Prb_{\Mc}\big(X_{\mathsf{NonDe}_{\Gf}(r)}\miid X_W\big)=\Prb_{\Mc}\big(X_{\mathsf{NonDe}_{\Gf}(r)}\miid \Do(X_r),X_W\big).
    \]
    Hence, we have $\mu_r$-a.s.\ 
    \[
        \begin{aligned}
             &\phi_r\big(\Prb_{\Mc}(X_{V});\Gf\big)\\
             &= \Prb_{\Mc}\big(X_{\de_{\Gf}(r)\sm \{r\}}\mid X_{\mathsf{NonDe}_{\Gf}(r)\cup \{r\}}\miid X_W\big) \otimes \Prb_{\Mc}\big(X_{\mathsf{NonDe}_{\Gf}(r)}\miid X_W\big)\\
             &=\Prb_{\Mc}\big(X_{\de_{\Gf}(r)\sm \{r\}}\mid X_{\mathsf{NonDe}_{\Gf}(r)}\miid \Do(X_r),X_W\big) \otimes \Prb_{\Mc}\big(X_{\mathsf{NonDe}_{\Gf}(r)}\miid \Do(X_r),X_W\big)\\
             &=\Prb_{\Mc}\big(X_{V\sm \{r\}}\miid \Do(X_r),X_W\big).
        \end{aligned}
    \]
    Note that since $\Mc\in \Mb_c^+(\Af)$, Markov kernel $\Prb_{\Mc}\big(X_{V\sm \{r\}}\miid \Do(X_r),X_W\big)$ is positive and continuous. By the $\mu_r$-a.s.\ equality showed above, we can always modify $\phi_r\big(\Prb_{\Mc}(X_{V}\miid X_{W});\Gf\big)$ on a $\mu_r$-null set to make it continuous and after taking this continuous version we have the pointwise equality.
\end{proof}

\begin{theorem}[Measure-theoretic ID algorithm]
    Let $\Mc\in \Mb_c^+(\Af)$ be an L-CBN with observable ADMG $\Af=(\Vc,\Ec)$. Define $\Dc\coloneqq \anc_{\Af_{\Vc\sm B}}(A)$. For non-empty disjoint sets $A,B\subseteq \Vc$, we have pointwise identification equality 
    \[
        \begin{aligned}
            \Prb_\Mc(X_A\in \cdot\miid \Do(X_B))&=\Big(\bigotimes_{D\in \mathsf{Distr}(\Af_\Dc)}^\succ \Qc[D]\Big)(\cdot, \Xc_{\Dc\sm A})\\
            &=\Big(\bigotimes_{D\in \mathsf{Distr}(\Af_\Dc)}^\succ \phi_{\Vc\sm D}(\Prb_{\Mc}(X_{\Vc});\Af)\Big)(\cdot, \Xc_{\Dc\sm A}),
        \end{aligned}
    \]
    provided $\mathsf{Distr}(\Af_\Dc)\subseteq \mathsf{Intrin}(\Af)$ and we take continuous version of the conditional kernels when applying the measure-theoretic fixing operations. Here, the product of factors over districts is rigorously defined in \cite[Definition~5.3.16]{forre2025mathematical}. This procedure is complete: if $\mathsf{Distr}(\Af_\Dc)\nsubseteq \mathsf{Intrin}(\Af)$, then the causal effect is non-identifiable.
\end{theorem}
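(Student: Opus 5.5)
The proof splits into three parts: the first equality (truncated factorization over districts), the second equality (identifying each district factor by iterated fixing), and completeness.

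\textbf{First equality.} I would argue at the level of Markov kernels, using the ancestral-set and district factorization results of \cite[Section~5.3]{forre2025mathematical}.

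First, $\Dc=\anc_{\Af_{\Vc\sm B}}(A)$. Since $\Vc\sm(\Dc\cup B)$ contains no ancestors of $A$ in $\Af_{\Do(B)}$, marginalizing the hard-intervened model over it gives
\[
\Prb_\Mc(X_A\miid \Do(X_B))=\Prb_\Mc(X_A\miid \Do(X_{\Vc\sm\Dc})).
\]
This follows from rule 3 of \cref{defthm:causal_calculus}, applied with the separation $A\sep{\mathsf{id}}{}I_{\Vc\sm(\Dc\cup B)}\mid B$. It then holds pointwise, because both sides are positive and continuous (\cref{lem:pcmk}) and \cref{prop:point_wise_ident}(3) applies. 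The latter requires the positivity assumptions, which are supplied by $\mu_D\ll\Qc[D]\ll\mu_D$ for all $D$.

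Second, $\Qc[\Dc]=\Prb_\Mc(X_\Dc\miid\Do(X_{\Vc\sm\Dc}))$ factorizes as the product of $\Qc[D]$ over $D\in\mathsf{Distr}(\Af_\Dc)$, in the reverse topological order. This is the district factorization of an L-CBN. I would prove it directly from \cref{def:cbn}: group the latent parents of each district with that district and regroup the product of the original kernels. Since the groups share no latents, each group integrates to exactly $\Qc[D]$. This identity holds exactly, not merely almost surely, because it is an algebraic rearrangement of the defining product. Marginalizing out $\Dc\sm A$ then gives the first equality.

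\textbf{Second equality.} Each $D\in\mathsf{Distr}(\Af_\Dc)$ is intrinsic, so by \cite[Theorem~31]{richardson2023nested} there is a sequence $r_1,\dots,r_k$ enumerating $\Vc\sm D$ such that each $r_j$ is fixable in $\phi_{r_{j-1}}\cdots\phi_{r_1}(\Af)$. I would induct on $j$ using the preceding proposition.

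The inductive hypothesis is that, taking continuous versions at each stage, $\phi_{r_j}\cdots\phi_{r_1}(\Prb_\Mc(X_\Vc);\Af)$ equals $\Prb_\Mc(X_{\Vc\sm\{r_1..r_j\}}\miid\Do(X_{r_1..r_j}))$ pointwise.

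For the inductive step, I regard the intervened model $\Mc_{\Do(r_1..r_j)}$ as an L-iCBN with input nodes $W=\{r_1..r_j\}$ and observable graph $\Af_{\Do(r_1..r_j)}=\phi_{r_j}\cdots\phi_{r_1}(\Af)$. I must check that this model again lies in $\Mb^+_c$ of its graph. Its kernels are a subset of the original ones, so they are positive and continuous. Its $\Qc$-factors are $\Qc[D']$ with $D'\subseteq\Vc\sm W$, so the positivity condition is inherited. The proposition then applies to $r_{j+1}$, which completes the induction.

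Because each step fixes the same object pointwise, the order-independence in \cite{richardson2023nested} is not needed beyond the existence of a valid sequence. After $k$ steps the result is $\Qc[D]$.

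\textbf{Completeness.} Non-identifiability in the discrete setting is shown in \cite{richardson2023nested} by constructing two discrete models that agree observationally but differ in the causal effect. I would embed these into $\Mb^+_c(\Af)$: replace each discrete variable by a continuous one, such as a Gaussian-smoothed version or a perturbation with full support. The observational kernels must remain equal while the interventional kernels still differ.

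This embedding is the main obstacle. The smoothing must preserve positivity and Feller continuity, keep the two observational laws exactly equal, and keep the two causal effects distinct. My first attempt would be to compose each discrete structural assignment with an independent additive $\Nc(0,\sigma^2)$ noise. The observational distributions of the smoothed variables then stay equal, because they are the same deterministic-plus-independent-noise functions of laws that already agree. The causal effects stay distinct for small $\sigma$, by continuity of the effect in $\sigma$. Alternatively, I would simply cite the completeness argument in \cite{forre2025mathematical}.
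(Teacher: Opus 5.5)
The paper states this theorem without a separate proof. Your induction for the second equality is exactly the iteration the paper leaves implicit: you follow a valid fixing sequence, check that $\Mc_{\Do(r_1,\dots,r_j)}$ stays in $\Mb_c^+(\Af_{\Do(r_1,\dots,r_j)})$, and then apply the preceding fixing proposition.

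\textbf{Completeness.} Your smoothing argument fails under either reading.
\begin{itemize}
\item If you only blur the observed variables, $\tilde X_v=X_v+\epsilon_v$, while children keep reading the discrete $X_v$, then the discrete variables become latent. The observable graph is then no longer $\Af$, since the $\tilde X_v$ are linked only through latents. Moreover, $\Do(\tilde X_B)$ has no downstream effect, so the two causal effects coincide.
\item If instead children read the smoothed values through continuous extensions of the structural functions, the two models evaluate \emph{different} structural functions off the lattice. Their observational laws then no longer agree, so ``the same deterministic-plus-noise functions of laws that already agree'' is false.
\end{itemize}
No smoothing is needed. A finite set with the discrete topology is Polish, and every kernel out of a finite discrete space is Feller continuous. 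Positivity on open sets just means a strictly positive mass function, and with counting measures $\mu_D\ll\Qc[D]\ll\mu_D$ says $\Qc[D]$ has a strictly positive mass function. Hence any discrete witness of non-identifiability whose kernels are all strictly positive (latent ones included) already lies in $\Mb_c^+(\Af)$. Completeness thus reduces to the existence of such witnesses in the discrete theory \cite{richardson2023nested}. Only if you insist on $\Xc_v=\Rb$ do you need a genuinely continuous construction, and the proposal does not supply one.

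\textbf{District factorization.} This is not an ``algebraic rearrangement'' of the defining kernel product, because the districts of $\Af_\Dc$ need not be orderable. Take the front-door graph $v_1\to m\to v_2$, $v_1\leftrightarrow v_2$. Here $\Qc[\{v_1,v_2\}]$ depends on $x_m$ and $\Qc[\{m\}]$ depends on $x_{v_1}$, so neither factor can be applied first, and the latent-level product interleaves the two groups.

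The product in \cite[Definition~5.3.16]{forre2025mathematical} must therefore split $\Qc[\{v_1,v_2\}]$ into $\Prb_\Mc(X_{v_1})$ and the conditional $\Prb_\Mc(X_{v_2}\mid X_{v_1}\miid\Do(X_m))$, with $\Qc[\{m\}]$ inserted in between. That conditional is unique only up to sets $N\subseteq\Xc_{v_1}\times\Xc_m$ whose $x_m$-sections are $\Prb_\Mc(X_{v_1})$-null. In the product, however, $X_m$ is drawn given $X_{v_1}$. If, say, $X_m=X_{v_1}$ and $N$ is the diagonal, changing the version changes the product.

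So your exactness claim for every input value needs an argument, and this is where $\mu_D\ll\Qc[D]\ll\mu_D$ enters. By Fubini, such an $N$ is $\mu_{v_1}\otimes\mu_m$-null. It is therefore null for the joint law of $(X_{v_1},X_m)$ produced by the product, which is absolutely continuous with respect to $\mu_{v_1}\otimes\mu_m$. Your conclusion is correct in $\Mb_c^+(\Af)$, but only via this positivity argument.
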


\section{Discussion}

One of the central goals of causal inference is to use observational data, or a combination of observational and experimental data, to answer causal queries. Given a causal graph, causal calculus provides a sound and complete method for expressing a target causal quantity as a functional of the observational distribution \cite{pearl2009causality,huang06pearl}; that is,
\[
\Prb(X_A \mid X_C\miid \Do(X_B)) = \psi\bigl(\Prb(X_\Vc)\bigr),
\]
whenever \(\Prb(X_A \mid X_C\miid \Do(X_B))\) is identifiable, where \(\psi\) is a functional derived from causal calculus. In principle, this makes it possible to estimate causal quantities statistically from observational data. Although causal calculus was originally formulated in the discrete setting, with positivity conditions often left implicit or overlooked \cite{pearl95causal_diagram,lauritzen2001causal}, its continuous analogue has often been tacitly treated as a straightforward extension of the discrete case. However, several subtleties concerning positivity conditions and the treatment of null sets can easily be overlooked, rendering naive extensions invalid. These issues were addressed rigorously by Forr\'e in the general measure-theoretic setting in \cite{forre2021transitional}, with further developments in \cite{forre2025mathematical}.

In \cref{defthm:causal_calculus}, certain positivity conditions are provided. It is worth mentioning that, as some of the preceding examples illustrate, these conditions need not be necessary for obtaining an almost-sure identification result. Determining positivity conditions that are both sufficient and necessary remains a challenging open problem.

We have also shown that, in general, pointwise identification cannot be expected. Nevertheless, in some settings such results may still be obtainable, provided one imposes additional regularity assumptions. One convenient class is that of positive continuous Markov kernels, studied by Gill and Robins in the context of continuous causal inference under the potential-outcomes framework \cite{Richard01causal_inference_continuous}. More broadly, much of the literature on conditional density estimation imposes both positivity and smoothness conditions on the relevant densities. This suggests that such assumptions may be useful not only for establishing pointwise causal identification, but also for enabling subsequent density estimation. It would also be interesting to see if there are other convenient conditions for pointwise identification.

\acks{I thank Joris M.\ Mooij, Onno Zoeter and Booking.com for support.}

\newpage
\bibliographystyle{plainurl}
\bibliography{bibfile_cdscm}

@misc{jacobs2025structured,
  author       = {Bart Jacobs},
  title        = {Structured Probabilistic Reasoning},
  note         = {Incomplete draft, version of September 11, 2025},
  year         = {2025},
  url          = {https://cs.ru.nl/B.Jacobs/PAPERS/ProbabilisticReasoning.pdf}
}

@article{fritz23dsep,
  author  = {Tobias Fritz and Andreas Klingler},
  title   = {The d-Separation Criterion in Categorical Probability},
  journal = {Journal of Machine Learning Research},
  year    = {2023},
  volume  = {24},
  number  = {46},
  pages   = {1--49},
  url     = {http://jmlr.org/papers/v24/22-0916.html}
}

@article{fritz2021_definetti_catprob_josa,
  title   = {De Finetti's Theorem in Categorical Probability},
  author  = {Tobias Fritz and Tomáš Gonda and Paolo Perrone},
  journal = {Journal of Stochastic Analysis},
  volume  = {2},
  number  = {4},
  year    = {2021},
  doi     = {10.31390/josa.2.4.06},
  url     = {https://repository.lsu.edu/josa/vol2/iss4/6/}
}

@article{lorenz2023causal_string_diagrams,
  title={Causal models in string diagrams},
  author={Robin Lorenz and Sean Tull},
  year={2023},
  journal={arXiv.org preprint,  arXiv:2304.07638 [cs.LO]},
  url={https://arXiv.org/abs/2304.07638},
}

@article{fritz2025empirical_slln_catprob,
  title={Empirical Measures and Strong Laws of Large Numbers in Categorical Probability},
  author={Tobias Fritz and Tomáš Gonda and Antonio Lorenzin and Paolo Perrone and Areeb Shah Mohammed},
  year={2025},
  journal={arXiv.org preprint,  arXiv:2503.21576 [math.PR]},
  url={https://arXiv.org/abs/2503.21576},
}

@article{chen2024aldous_hoover_catprob,
  title={The {Aldous--Hoover} Theorem in Categorical Probability},
  author={Leihao Chen and Tobias Fritz and Tomáš Gonda and Andreas Klingler and Antonio Lorenzin},
  year={2024},
  journal={arXiv.org preprint,  arXiv:2411.12840 [math.ST]},
  url={https://arXiv.org/abs/2411.12840},
}

@article{pearl95causal_diagram,
  author  = {Pearl, Judea},
  title   = {Causal Diagrams for Empirical Research},
  journal = {Biometrika},
  volume  = {82},
  number  = {4},
  pages   = {669--688},
  year    = {1995},
  doi     = {10.1093/biomet/82.4.669}
}

@article{constantinou17extendedci,
  author  = {Constantinou, Panayiota and Dawid, A. Philip},
  title   = {Extended Conditional Independence and Applications in Causal Inference},
  journal = {The Annals of Statistics},
  volume  = {45},
  number  = {6},
  pages   = {2618--2653},
  year    = {2017},
  doi     = {10.1214/16-AOS1537}
}

@article{dawid2001separoids,
  title={Separoids: A mathematical framework for conditional independence and irrelevance},
  journal={Annals of Mathematics and Artificial Intelligence},
  volume={32},
  author = {Philip Dawid},
  pages={335--372},
  year={2001},
  publisher={Springer}
}

@article{dawid79ci_in_statistics,
 ISSN = {00359246},
 URL = {http://www.jstor.org/stable/2984718},
 author = {Philip Dawid},
 journal = {Journal of the Royal Statistical Society. Series B (Methodological)},
 number = {1},
 pages = {1--31},
 publisher = {[Royal Statistical Society, Oxford University Press]},
 title = {Conditional Independence in Statistical Theory},
 urldate = {2025-03-18},
 volume = {41},
 year = {1979}
}

@article{lauritzen2001causal,
  title={Causal inference from graphical models},
  author={Lauritzen, Steffen L},
  journal={Monographs on Statistics and Applied Probability},
  volume={87},
  pages={63--108},
  year={2001},
  publisher={Chapman \& Hall},
  URL={https://web.math.ku.dk/~richard/BSc/Lauritzen.pdf}  
}

@article{dawid1980CIforSO,
author = {Philip Dawid},
title = {{Conditional Independence for Statistical Operations}},
volume = {8},
journal = {The Annals of Statistics},
number = {3},
publisher = {Institute of Mathematical Statistics},
pages = {598 -- 617},
year = {1980},
doi = {10.1214/aos/1176345011},
URL = {https://doi.org/10.1214/aos/1176345011}
}

@InProceedings{kivva2022revisitid,
  title = 	 {Revisiting the general identifiability problem},
  author =       {Kivva, Yaroslav and Mokhtarian, Ehsan and Etesami, Jalal and Kiyavash, Negar},
  booktitle = 	 {Proceedings of the Thirty-Eighth Conference on Uncertainty in Artificial Intelligence},
  pages = 	 {1022--1030},
  year = 	 {2022},
  editor = 	 {Cussens, James and Zhang, Kun},
  volume = 	 {180},
  series = 	 {Proceedings of Machine Learning Research},
  month = 	 {01--05 Aug},
  publisher =    {PMLR},
  url = 	 {https://proceedings.mlr.press/v180/kivva22a.html}
}

@article{Richard01causal_inference_continuous,
author = {Richard D. Gill and James M. Robins},
title = {{Causal Inference for Complex Longitudinal Data: The Continuous Case}},
volume = {29},
journal = {The Annals of Statistics},
number = {6},
publisher = {Institute of Mathematical Statistics},
pages = {1785 -- 1811},
year = {2001},
doi = {10.1214/aos/1015345962},
URL = {https://doi.org/10.1214/aos/1015345962}
}

@article{forre2025mathematical,
  title={A Mathematical Introduction to Causality},
  author={Forr{\'e}, Patrick and Mooij, Joris M.},
  year={2025},
  url={https://staff.fnwi.uva.nl/j.m.mooij/articles/causality_lecture_notes_2025.pdf}
}

@book{kallenberg2017random,
  title={Random Measures, Theory and Applications},
  author={Kallenberg, O.},
  isbn={9783319415987},
  series={Probability Theory and Stochastic Modelling},
  url={https://books.google.nl/books?id=i6WoDgAAQBAJ},
  year={2017},
  publisher={Springer International Publishing}
}

@article{bogachev20kant,
title = {Kantorovich problems and conditional measures depending on a parameter},
journal = {Journal of Mathematical Analysis and Applications},
volume = {486},
number = {1},
pages = {123883},
year = {2020},
issn = {0022-247X},
doi = {https://doi.org/10.1016/j.jmaa.2020.123883},
url = {https://www.sciencedirect.com/science/article/pii/S0022247X20300457},
author = {Vladimir I. Bogachev and Ilya I. Malofeev},
}

@article{forre2021transitional,
  title={Transitional conditional independence},
  author={Forr{\'e}, Patrick},
  journal={arXiv.org preprint},
  volume={arXiv:2104.11547 [math.ST]},
  year={2021}
}

@book{pearl2009causality, place={Cambridge}, edition={2nd}, title={Causality: Models, Reasoning, and Inference}, DOI={10.1017/CBO9780511803161}, publisher={Cambridge University Press}, author={Pearl, Judea}, year={2009}}

@article{richardson2023nested,
  title={Nested Markov properties for acyclic directed mixed graphs},
  author={Richardson, Thomas S and Evans, Robin J and Robins, James M and Shpitser, Ilya},
  journal={The Annals of Statistics},
  volume={51},
  number={1},
  pages={334--361},
  year={2023},
  publisher={Institute of Mathematical Statistics}
}

@inproceedings{forre2020causal,
  title={Causal calculus in the presence of cycles, latent confounders and selection bias},
  author={Forr{\'e}, Patrick and Mooij, Joris M},
  booktitle={Proceedings of the 36th Conference on Uncertainty in Artificial Intelligence},
  pages={71--80},
  year={2020},
}

@article{fritz2020synthetic,
  title={A synthetic approach to Markov kernels, conditional independence and theorems on sufficient statistics},
  author={Fritz, Tobias},
  journal={Advances in Mathematics},
  volume={370},
  pages={107239},
  year={2020},
  publisher={Elsevier}
}

@book{spirtes2001causation,
  title={Causation, prediction, and search},
  author={Spirtes, Peter and Glymour, Clark and Scheines, Richard},
  year={2001},
  publisher={MIT press}
}

@inproceedings{huang06pearl,
author = {Huang, Yimin and Valtorta, Marco},
title = {Pearl's calculus of intervention is complete},
year = {2006},
booktitle = {Proceedings of the 22ed Conference on Uncertainty in Artificial Intelligence},
pages = {217–224},
numpages = {8},
}

@article{richardson03markov_admg,
author = {Richardson, Thomas},
title = {Markov Properties for Acyclic Directed Mixed Graphs},
journal = {Scandinavian Journal of Statistics},
volume = {30},
number = {1},
pages = {145-157},
doi = {https://doi.org/10.1111/1467-9469.00323},
url = {https://onlinelibrary.wiley.com/doi/abs/10.1111/1467-9469.00323},
eprint = {https://onlinelibrary.wiley.com/doi/pdf/10.1111/1467-9469.00323},
year = {2003}
}

@article{Dawid21decision,
url = {https://doi.org/10.1515/jci-2020-0008},
title = {Decision-theoretic foundations for statistical causality},
author = {Philip Dawid},
pages = {39--77},
volume = {9},
number = {1},
journal = {Journal of Causal Inference},
doi = {doi:10.1515/jci-2020-0008},
year = {2021},
lastchecked = {2024-06-03}
}
\end{document}